\pdfoutput=1
\documentclass[11pt,letterpaper]{article}

\usepackage[english]{babel}
\usepackage[T1]{fontenc}
\usepackage{stmaryrd}
\usepackage{algorithmic}
\usepackage{amssymb}
\usepackage{marginnote}
\usepackage[ruled,vlined,linesnumbered]{algorithm2e}
\usepackage{booktabs,tabularx,threeparttable}
\usepackage[normalem]{ulem}
\usepackage{tikz}
\usepackage{enumerate}
\usetikzlibrary{fit,backgrounds,positioning,shapes.geometric}
\usepackage{fullpage}
\usepackage{mathrsfs}
\usepackage{amsmath}
\usepackage{bbm}
\usepackage{graphicx}
\usepackage{stackengine}
\usepackage{amsthm}
\usepackage{subcaption}
\usepackage{authblk}
\usepackage{thmtools}
\usepackage{thm-restate}
\usepackage[disable]{todonotes}
\usepackage{fancyhdr,lastpage}
\usepackage{amsfonts}
\usepackage[dvipsnames]{xcolor}
\usepackage[colorlinks=true,allcolors=blue]{hyperref}
\usetikzlibrary{calc, intersections, positioning}

\makeatletter
\renewcommand{\maketitle}{%
\begin{center}
{\large\bfseries \@title\par}
\vspace{0.5em}
{\normalsize \@author\par}
\end{center}
}
\renewenvironment{abstract}{%
\par\normalfont\normalsize
\begin{center}\begin{minipage}{0.9\linewidth}\small\noindent\textup{Abstract. }\ignorespaces}{\end{minipage}\end{center}\par}

\def\@seccntformat#1{\csname the#1\endcsname.\ }

\renewcommand\section{\@startsection{section}{1}{\z@}%
{2.0ex plus .5ex minus .2ex}{1.0ex plus .2ex}%
{\normalfont\large\centering}}

\renewcommand\subsection{\@startsection{subsection}{2}{\z@}%
{1.75ex plus .5ex minus .2ex}{-1em}%
{\normalfont\normalsize\bfseries}}
\makeatother

\newtheorem{thm}{Theorem}[section]

\newtheorem{lm}[thm]{Lemma}

\newtheorem{prop}[thm]{Proposition}

\theoremstyle{remark}

\theoremstyle{plain}

\newcommand{\set}[1]{\left\{#1\right\}}

\newcommand{\abs}[1]{\left| #1 \right|}
\newcommand{\norm}[1]{\left|\left| #1 \right|\right|}

\newcommand{\R}{\mathbb{R}}

\newcommand{\N}{\mathbb{N}}
\newcommand{\C}{\mathbb{C}}
\newcommand{\F}{\mathbb{F}}
\newcommand{\B}{\mathbb{B}}
\newcommand{\Z}{\mathbb{Z}}

\begin{document}

\title{SCHR\"{O}DINGER OPERATORS WITH GENERIC POTENTIALS ACHIEVE MAXIMAL RESONANCE DENSITY}
\author{TRAVIS CUNNINGHAM}
\date{}
\maketitle

\begin{abstract}
We show that for a generic real or complex-valued compactly supported potential, the corresponding Schr\"{o}dinger operator achieves maximal resonance density, in the sense that its integrated resonance counting function achieves the optimal asymptotic upper bound. For odd dimensions this follows from results of Dinh-Vu once we adapt an argument of Christiansen-Hislop. The proof for even dimensions constitutes the bulk of the paper, and we prove several new results on resonances which have analogues in the odd dimensional case. This includes a sharp upper bound on the integrated resonance counting function for any compactly support potential, a proof that the characteristic function of a ball has resonance counting function which achieves the optimal upper bound, and an even-dimensional analogue of the result of Dinh-Vu on asymptotics of the resonance counting functions for complements of pluripolar subsets of analytic families of potentials. We use the characterization of resonances as zeros of certain Fredholm determinant functions related to the scattering matrix, allowing us to apply techniques and results from the theories of one and several complex variables. Our proof that the characteristic function of a ball has counting function achieving the optimal upper bound uses the uniform asymptotics of Bessel functions and follows ideas of Zworski, Christiansen-Hislop, and Dinh-Vu. 
\end{abstract}

\section{INTRODUCTION}

For $V \in L_c^\infty$, the resolvent $R_V(\lambda) := (-\Delta + V - \lambda^2)^{-1}$ has meromorphic continuation as an operator taking $L_c^2(\R^d)$ to $H_{\operatorname{loc}}^2(\R^d)$; this meromorphic continuation is to $\C$ when $d$ is odd, and to the logarithmic cover of $\C$, 
\begin{equation*}
    \Lambda := \exp^{-1}(\C \backslash \set{0}),
\end{equation*}
when $d$ is even. The poles of this meromorphic continuation are called resonances, which can be thought of as generalizing eigenvalues to physical systems that allow energy to escape to infinity. We refer the interested reader to the monograph \cite{8} for a thorough introduction to the theory of resonances. In this paper, we study the asymptotic distribution of resonances for a generic choice of the potential $V$.

Let $d \geq 1$ be odd and let $n_V(r)$ denote the number of resonances with modulus $\leq r$ counted with multiplicity. For $d = 1$, Zworski shows in \cite{23} that
\begin{equation*}
    n_V(r) = \frac{4}{\pi} ar + o(r) \quad \text{as } r \to \infty \tag{1.1} \label{eq:101}
\end{equation*}
where $2a = \operatorname{diam}(\operatorname{supp} V)$ (see also \cite{9}, \cite{17}, \cite{R}, \cite{CC}). For odd $d \geq 3$, no analogue of \eqref{eq:101} is known, and in fact Christiansen shows in \cite{2} that there are complex-valued potentials with no resonances at all. However, if we set 
\begin{equation*}
    N_V(r) := \int_0^r (1/t)(n_V(t) - n_V(0))dt, 
\end{equation*}
then Dinh-Vu, \cite[Theorem 3.1]{7}, improves the earlier results of Zworski, \cite{25}, and Stefanov, \cite{19}, to show that 
\begin{equation*}
    N_V(r) \leq \frac{c_d a^d}{d}r^d + O(r^{d-1} \log r) \quad \text{as } r\to \infty \tag{1.2} \label{eq:102}
\end{equation*}
where $2a = \operatorname{diam}(\operatorname{supp}V)$ and $c_d$ is the explicit dimensional constant defined in \eqref{eq:204}. Moreover, \cite{24}, \cite{19} show that this bound is sharp, as there exist radial potentials with matching lower bound. As is the case with many settings in the theory of resonances, general lower bounds remain an open problem. This is the primary motivation for this paper and our first result shows that in fact the optimal upper bound in \eqref{eq:102} is achieved for "most of" the potentials supported in the ball of radius $a$. 

To state the result precisely, recall that if $X$ is a complete metric space, we call a dense $G_\delta$ set $S \subset X$ \emph{Baire typical} (see \cite{16}); we call a property \emph{generic} if it is satisfied for all elements in a Baire typical subset. Moreover, we define $\mathbb B_a := \set{x \in \R^d : \abs{x} < a}$ and for $\mathbb F = \R$ or $\C$, let $L^\infty(\mathbb B_a; \mathbb F) = \set{f \in L^\infty(\R^d; \mathbb F) : \operatorname{supp}f \subset \overline{\mathbb B}_a}$.

\begin{thm}\label{thm:1.1}
    Let $d \geq 3$ be odd, $a > 0$, and $\F = \R$ or $\C$. Then the set 
    \begin{equation*}
        \mathfrak M_{\F, a} := \set{V \in L^\infty(\mathbb B_a; \F) : \varlimsup_{r\to \infty} \frac{N_V(r)}{r^d} = \frac{c_da^d}{d}}
    \end{equation*}
    is Baire typical in $L^\infty(\B_a; \F)$.
\end{thm}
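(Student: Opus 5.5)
The plan is to write $\mathfrak M_{\F,a}$ as a countable intersection of open sets and prove density using the pluripolar-complement result of Dinh--Vu, following the Christiansen--Hislop argument. By \eqref{eq:102}, every $V \in L^\infty(\B_a;\F)$ has $\varlimsup N_V(r)/r^d \le c_d a^d/d$, since $\operatorname{diam}\operatorname{supp}V \le 2a$. So $V \in \mathfrak M_{\F,a}$ iff for every $m,k \in \N$ there is $r > k$ with $N_V(r) > (c_da^d/d - 1/m) r^d$. Set
\begin{equation*}
U_{m,k} := \set{V \in L^\infty(\B_a;\F) : \exists\, r>k,\ N_V(r) > (c_da^d/d - 1/m)r^d},
\end{equation*}
so that $\mathfrak M_{\F,a} = \bigcap_{m,k} U_{m,k}$. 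By the Baire category theorem it suffices to show that each $U_{m,k}$ is open and dense.

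\textbf{Openness.} In odd dimensions the resonances are the zeros, with multiplicity, of an entire function such as $f_V(\lambda) = \det(I + V R_0(\lambda)\chi)$ with $\chi \in C_c^\infty$ equal to $1$ near $\overline{\B}_a$. The map $V \mapsto f_V$ is continuous from $L^\infty(\B_a;\F)$ into holomorphic functions with the topology of locally uniform convergence. The function $N_V(r)$ is expressed through Jensen's formula as $\frac{1}{2\pi}\int_0^{2\pi}\log|f_V(re^{i\theta})|\,d\theta$ minus a normalization at $0$; multiplicity at $0$ needs some care, for example by renormalizing by $f_V(\lambda)/\lambda^{n_V(0)}$. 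For a fixed $r$ this quantity is lower semicontinuous in $V$; indeed it is continuous if no zeros lie on $|\lambda|=r$, and Hurwitz handles the general case. Hence the condition at one fixed $r$ is open, and $U_{m,k}$, a union over $r$, is open.

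\textbf{Density.} Fix $V_0$ and $\epsilon>0$. Let $V_1 = \mathbbm{1}_{\B_a}$, or a radial potential with nonvanishing jump at $|x|=a$ as in \cite{24,19}; it satisfies $\varlimsup N_{V_1}(r)/r^d = c_da^d/d$. Consider the complex line $z \mapsto V_z := (1-z)V_0 + zV_1$, an analytic family of potentials in $L^\infty(\B_a;\C)$. By Dinh--Vu's theorem on analytic families, the set of $z\in\C$ with $\varlimsup N_{V_z}(r)/r^d < c_da^d/d$ is pluripolar, which in one variable means polar, provided the maximal value is attained at one parameter; here it is attained at $z=1$. A polar subset of $\C$ has Lebesgue measure zero, so its intersection with $\R$ has no interior, and is in fact polar in $\R$. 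Hence there are real $z$ arbitrarily close to $0$ with $V_z \in \mathfrak M_{\F,a} \subset U_{m,k}$, and $\|V_z - V_0\|_\infty \le |z|\,\|V_1 - V_0\|_\infty < \epsilon$. Taking $z$ real ensures that $V_z$ is real-valued when $\F=\R$.

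\textbf{Main obstacle.} The hardest step is matching the exact hypotheses and conclusion of Dinh--Vu's theorem: the form in which the parameter family enters, and the requirement that the exceptional set be pluripolar in $\C^n$ with $n=1$. One also needs that the extremal potential has support diameter exactly $2a$, together with the matching lower bound. If their theorem is stated only for $\limsup$ along the full family with a $\sup$ normalization, I would instead apply the plurisubharmonic upper-envelope argument directly. The function $z \mapsto \varlimsup_r r^{-d}\log\max_{|\lambda|=r}|f_{V_z}(\lambda)|$ has a regularization that is subharmonic, bounded above by the optimal constant, and equal to it at $z=1$. Standard Lelong/Bedford--Taylor facts then show that the set where it is strictly smaller is polar. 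This reproduces the Christiansen--Hislop argument.
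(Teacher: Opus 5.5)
Your density step is essentially the paper's. It also uses the segment $V_z = zV_1 + (1-z)V$ through the radial step potential, invokes \cite[Theorems 1.1 and 1.2]{7} to get a pluripolar exceptional set, and picks a small real $z_0$ outside it.

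The real difference is the $G_\delta$ step. The paper copies its even-dimensional Lemmas~\ref{lem:4.3}--\ref{lem:4.5} and writes the complement of $\mathfrak M_{\F,a}$ as the $F_\sigma$ set $\bigcup B(N,M,1/\ell)$. The closed sets are cut out by $\int_0^\pi \log\abs{s_V(re^{i\theta})}\,d\theta \le (2\pi c_d a^d/d - 1/\ell) r^d + M r^{d-1}$ for all $r \ge 2N+1$. Closedness uses $s_{V_k} \to s_V$ locally uniformly. Identifying the union with the complement (in both directions) uses the sharp comparison $N_V(r) = \frac{1}{2\pi}\int_0^\pi \log\abs{s_V(re^{i\theta})}\,d\theta + O(r^{d-1})$ from \cite[Proposition 3.2]{7}.

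You instead write $\mathfrak M_{\F,a} = \bigcap_{m,k} U_{m,k}$, and need only the upper bound \eqref{eq:102} and lower semicontinuity of $V \mapsto N_V(r)$. That works. Suppose $f_V$ is entire, depends continuously on $V$ for locally uniform convergence, and has the nonzero resonances as its nonzero zeros with multiplicity. Then $N_V(r) = \sum_{0<\abs{\lambda_j}<r} \log(r/\abs{\lambda_j})$. By Hurwitz, zeros crossing $\abs{\lambda}=r$ contribute continuously, while zeros leaving $\lambda = 0$ only add positive terms. So your route is more elementary for this half, with no Jensen-type comparison at all. The paper's route has the merit of being verbatim its even-dimensional argument, where only the half-plane functions $f_{V,m}$ and the half-annulus formula are available.

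Several details need repair.

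\emph{The determinant.} For $d \ge 3$, $VR_0(\lambda)\chi$ is not trace class, so take $f_V = \det_p(I + VR_0(\lambda)\chi)$ with an integer $p > d/2$. Also, the discontinuity of $N_V(r)$ comes from $\lambda = 0$, not from zeros on $\abs{\lambda} = r$.

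\emph{The hypotheses of Dinh--Vu's Theorem 1.2.} It does not take ``maximal $\varlimsup$ at one parameter'' as its hypothesis. It needs $n_{V_{z_0}}(r) = c_d a^d r^d + O(r^{d-\delta})$ at some $z_0$, and a uniformly bounded family. So take $V_1 = \mathbbm{1}_{\B_a}$, for which their Theorem 1.1 gives any $\delta < 3/4$. Restrict $z$ to a bounded disc containing $0$ and $1$. Then integrate the resulting asymptotics of $n_{V_z}$ to get those of $N_{V_z}$.

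\emph{Your fallback uses the wrong functional.} The quantity $r^{-d}\log\max_{\abs{\lambda}=r}\abs{f_{V_z}(\lambda)}$ sees the maximum of the indicator, not its mean; Jensen only gives $N_V(r) \le \log\max_{\abs{\lambda}=r}\abs{f_V(\lambda)} + O(1)$. It is not bounded by $c_d a^d/d$, for instance for the ball, since $\log\abs{f_V} = o(r^d)$ in the physical half-plane. Use instead the circle mean $\frac{1}{2\pi}\int_0^{2\pi}\log\abs{f_{V_z}(re^{i\theta})}\,d\theta$ (with the normalization at $0$ handled), or the paper's half-circle mean of $\log\abs{s_{V_z}}$.

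\emph{The real slice of $E$.} Planar measure zero says nothing about $E \cap \R$. The fact you need is that a polar set meets $\R$ in a set of linear measure zero.
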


The above theorem continues a string of related results, see \cite{5}, \cite{3}, \cite{7}, and \cite{Dn}. The proof follows almost entirely from results of Dinh-Vu, \cite{7}; our contribution is only to adapt the argument of Christiansen-Hislop, \cite{5}, and to show how combining that argument with the results of Dinh-Vu, \cite{7}, implies Theorem \ref{thm:1.1}.

The bulk of this paper concerns the even-dimensional analogue of Theorem \ref{thm:1.1}. When the dimension is even and resonances lie in $\Lambda$, we count resonances on each open sheet $\Lambda_m, m \in \Z$ consisting of $\lambda \in \Lambda$ with $m \pi < \arg \lambda < (m+1) \pi$. Let 
\begin{equation*}
    n_{V, m}(r) := \set{\lambda \text{ a resonance } : \lambda \in \Lambda_m, 0 < \abs{\lambda} < r}
\end{equation*}
and 
\begin{equation*}
    N_{V, m}(r) := \int_0^r (1/t)n_{V, m}(t)dt.
\end{equation*}
Compared with the counting functions in odd dimensions, here we avoid the zero resonance which is more complicated to describe in even dimensions (see \cite[Section 2]{4} and references therein). Luckily, we may ignore it since we are only concerned with the asymptotic growth of $n_{V, m}(r)$, $N_{V, m}(r)$. We have the following analogue of Theorem \ref{thm:1.1}:

\begin{thm}\label{thm:1.2}
    Let $d \geq 2$ be even, $a > 0$, and $\F = \R$ or $\C$. Then the set 
    \begin{equation*}
        \mathfrak M_{\F, a} := \set{V \in L^\infty(\B_a, \F) : \text{For each } m \in \Z \backslash \set{0}, \varlimsup_{r \to \infty} \frac{N_{V, m}(r)}{r^d} = \frac{c_da^d}{d}}
    \end{equation*}
    is Baire typical in $L^\infty(\B_a, \F)$. 
\end{thm}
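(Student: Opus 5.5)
The plan is to follow the Christiansen--Hislop scheme, with Dinh--Vu's pluripolar results replaced by their even-dimensional analogues from the abstract. The argument needs three ingredients for each fixed sheet $m \neq 0$.
\begin{enumerate}
\item A sharp upper bound: for every $V \in L^\infty(\B_a;\F)$,
\begin{equation*}
\varlimsup_{r\to\infty} N_{V,m}(r)/r^d \le c_d a^d/d .
\end{equation*}
\item A single potential attaining equality, namely $V_0 = \chi_{\B_a}$.
\item A statement that, for an analytic family $z \mapsto V_z$ with $z \in \C$, equality at one value of $z$ forces equality for all $z$ outside a pluripolar (in one variable, polar) set.
\end{enumerate}
We express $N_{V,m}$ through Jensen's formula applied to a Fredholm determinant. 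The determinant is $\det(I + c\,m\,T(\lambda))$, where $T$ is built from the scattering matrix, or from $V R_0(\lambda)\chi$. It is holomorphic in $\lambda$ on the closed upper half plane of the physical sheet, and its zeros encode the resonances on $\Lambda_m$. Its logarithm of modulus is plurisubharmonic jointly in $(\lambda, V)$ along analytic families.

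Genericity then reduces to a countable intersection. For $m \neq 0$ and $j, k \in \N$, define
\begin{equation*}
\mathcal U_{m,j,k} := \set{V : \exists\, r > k \text{ with } N_{V,m}(r) > (c_d a^d/d - 1/j)\, r^d}.
\end{equation*}
By ingredient 1 we have $\mathfrak M_{\F,a} = \bigcap_{m\neq0}\bigcap_{j,k} \mathcal U_{m,j,k}$, a countable intersection. Two properties of each $\mathcal U_{m,j,k}$ remain to be checked.

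\textbf{Openness.} This follows because resonances on $\Lambda_m$ inside a disc depend continuously on $V$ in $L^\infty$. Hurwitz's theorem applies to the determinant, which depends continuously on $V$ locally uniformly in $\lambda$. Hence $N_{V,m}(r)$ is lower semicontinuous in $V$ for generic $r$. If necessary one slightly perturbs $r$ to avoid resonances on the circle $|\lambda| = r$; the Christiansen--Hislop trick of using a strict inequality at some $r$ handles this.

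\textbf{Density.} Given $V$ and $\epsilon > 0$, consider $V_z = (1-z)V + z\chi_{\B_a}$. This family is real when $z \in \R$ and $V$ is real. By ingredients 2 and 3, the set of $z$ for which $V_z$ fails maximal density on sheet $m$ is polar in $\C$. A polar set has empty interior even in $\R$, since polar subsets of $\R$ have zero length. So we can find $z$ arbitrarily close to $0$, real if $\F = \R$, with $V_z \in \bigcap_{j,k} \mathcal U_{m,j,k}$. Taking the union over the countably many $m$ of polar sets is still polar, so a single $z$ works for all $m$ at once. This shows density, and Baire's theorem finishes the proof.

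The main obstacles are ingredients 2 and 3.
\begin{itemize}
\item For ingredient 3, I would follow Dinh--Vu. Set $u_r(z) = r^{-d} N_{V_z,m}(r)$. Via Jensen, $u_r$ equals an average of $\log|\det|$ over circles, up to a term that is harmonic in $z$. These functions are uniformly bounded above by ingredient 1, locally in $z$. The upper-semicontinuous regularization of $\varlimsup_r u_r$ is subharmonic. The standard Lelong--Bedford--Taylor argument then shows that it equals the constant $c_d a^d/d$ off a polar set, once it attains that value at one point.
\item The uniform upper bound in ingredient 1 needs a Stefanov/Dinh--Vu type estimate on singular values of $V R_0(\lambda)\chi$ on sheet $m$. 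We use $R_0(e^{im\pi}\lambda) = R_0(\lambda) - m\, T(\lambda)$, with $T$ having explicit Bessel kernel, which gives the sharp constant $c_d$.
\item For ingredient 2, separation of variables reduces $\chi_{\B_a}$ to Bessel functions $J_\nu, H_\nu$ for $\nu = \ell + (d-2)/2$. The resonance equation on sheet $m$ becomes an explicit equation per angular momentum $\ell$, with multiplicity equal to the dimension of spherical harmonics of degree $\ell$. Debye/Olver uniform asymptotics in the regime $\nu \sim r$ count zeros in each $\ell$-block, following Zworski and Christiansen--Hislop. Summing over $\ell$ gives $N_{V_0,m}(r) \sim c_d a^d r^d/d$.
\end{itemize}
I expect ingredient 2 to be the most delicate step, because the $\log$-branch term $m\,T$ changes the equation from the odd-dimensional one and its zeros must be located uniformly in $\nu/r$.
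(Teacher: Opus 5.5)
Your architecture is sound, and two of your steps legitimately differ from the paper. First, you get openness of $\mathcal U_{m,j,k}$ from lower semicontinuity of $N_{V,m}(r)$ in $V$, via persistence of resonances in the open sheet $\Lambda_m$, instead of the paper's closed sets $A_m(N,M,\varepsilon,j)$. That persistence is best argued through meromorphic Fredholm theory for $I+VR_0(\lambda)\chi_V$ on $\Lambda$, not Hurwitz for $f_{V,m}$, which may have poles and exceptional zeros near $0$. Second, your Bedford--Taylor/maximum-principle argument for ingredient 3 yields only a $\varlimsup$ statement. That is all Theorem~\ref{thm:1.2} needs, and it only requires $\varlimsup_r N_{V_1,m}(r)/r^d=c_da^d/d$ at one point. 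It therefore bypasses the rate in Theorem~\ref{thm:1.4} and Lemmas~\ref{lem:4.1}, \ref{lem:4.2}.

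The genuine gap is the step ``via Jensen, $u_r$ equals an average of $\log\abs{\det}$ over circles, up to a term that is harmonic in $z$.'' In even dimensions $f_{V,m}$ is not holomorphic on the closed upper half plane. It is meromorphic in $\Lambda_0$ with finitely many poles, continues across the real axis only for $\abs{\lambda}>c_0$, and its zeros match resonances on $\Lambda_m$ only up to finitely many exceptions. So there is no Jensen formula over circles. One must count zeros in a half-annulus $\set{R<\abs{\lambda}<r,\ 0<\arg\lambda<\pi}$ using a Carleman-type formula such as \eqref{eq:211}. Besides the semicircle average, that formula contains the real-axis terms $\operatorname{Im}\int_R^r t^{-1}\int f_{V,m}'/f_{V,m}\,ds\,dt$, with inner integrals over $[-t,-R]$ and $[R,t]$. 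These terms depend on $r$. What matters is not harmonicity in $z$ but that they are $O(r^{d-1})$, locally uniformly in $z$.

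That is exactly Theorem~\ref{thm:2.3}, and it needs an input absent from your proposal. The input is the Christiansen--Hislop bound \eqref{eq:203}, $\abs{\frac{d}{ds}\log f_{V,m}(s)}\le C\abs{s}^{d-2}$ for $\arg s\in\set{0,\pi}$, with $C$ depending only on $\norm{V}_\infty$ and $m$. It must be combined with a compactness argument that chooses inner radii free of zeros, so the constant is uniform on $\Omega'\subset\subset\Omega$.

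Without this step, your singular-value estimate gives only the bound on $\log\abs{f_{V,m}}$ (Proposition~\ref{prop:2.1}), not ingredient 1 for $N_{V,m}$. The identity $\mathfrak M_{\F,a}=\bigcap\mathcal U_{m,j,k}$ then has no support. In ingredient 3, the Bedford--Taylor argument controls $\varlimsup_r$ of the plurisubharmonic averages $\psi_m(r,z)$ from \eqref{eq:401}. But you cannot transfer the hypothesis at $z_0$ (a statement about $N_{V_1,m}$) to them, nor transfer the conclusion back to $\varlimsup_r N_{V_z,m}(r)/r^d$.

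Also, the local uniform upper bound you need for the regularized $\varlimsup$ to be subharmonic comes from Proposition~\ref{prop:2.1} with $\norm{V_z}_\infty$ locally bounded, not from the pointwise ingredient 1. Once this half-annulus estimate is added, the rest of your argument goes through.
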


Together with the optimal upper bound in Theorem \ref{thm:1.3} below, this theorem and Theorem \ref{thm:1.1} show that in any dimension, Schr\"{o}dinger operators with generic potentials achieve the maximal possible density of resonances.

Sharpening an earlier result of Intissar, \cite{10}, Vodev gave an upper bound in even dimensions for the order of the counting functions,
\begin{equation*}
    \varlimsup_{r \to \infty} \frac{\log n_{V, m}(r)}{\log r} \leq d, \tag{1.3} \label{eq:103} 
\end{equation*}
see \cite{21}, \cite{22}. Christiansen-Hislop, \cite{6}, show that certain radial potentials have counting functions achieving this optimal order. Using this they prove a result related to our Theorem \ref{thm:1.2}, showing that for $d \geq 2$ even, a fixed compact set $K \subset \R^d$ with nonempty interior, and $\F = \R$ or $\C$, the set 
\begin{equation*}
    \set{V \in L^\infty(K; \F) : \text{ For each } m \in \Z \backslash\set{0}, \varlimsup_{r \to \infty} \frac{\log n_{V, m}(r)}{\log r} = d}
\end{equation*}
is Baire typical in $L^\infty(K; \F)$. 

Unlike Theorem \ref{thm:1.1}, the proof of Theorem \ref{thm:1.2} requires several new results on resonances in even dimensions, which have analogues in the odd-$d$ case. The first of these are the following sharp upper bounds giving an even-$d$ analogue of \eqref{eq:102} and sharpening the results of \cite{10}, \cite{21}, \cite{22}. 

\begin{thm}\label{thm:1.3}
    Let $d \geq 2$ be even, and $a > 0$. Then for any $V \in L^\infty(\B_a; \C)$, and any $m \in \Z \backslash \set{0}$, there exists $A > 0$ such that 
    \begin{equation*}
        N_{V, m}(r) \leq \frac{c_d a^d}{d}r^d + Ar^{d -1}\log r.
    \end{equation*}
\end{thm}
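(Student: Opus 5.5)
We would prove Theorem \ref{thm:1.3} by characterizing the resonances on $\Lambda_m$ as zeros of a holomorphic determinant function and then applying Jensen's formula. Fix $\chi \in C_c^\infty(\R^d)$ with $\chi = 1$ on $\overline{\B}_a$ and support in a slightly larger ball $\B_{a+\epsilon}$. Use the even-dimensional relation for the free resolvent $R_0(e^{i m\pi}\lambda) = R_0(\lambda) - m T(\lambda)$, where $T(\lambda)$ has kernel $c\,\lambda^{d-2}\int_{\mathbb S^{d-1}} e^{i\lambda\langle x-y,\omega\rangle}d\omega$. This lets us write the continuation of $\chi R_V \chi$ to $\Lambda_m$ in terms of data on the physical sheet $\Lambda_0$.

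Following the Froese/Christiansen--Hislop approach, on $0<\arg\lambda<\pi$ we consider
\[
f_m(\lambda) := \det\bigl(I + m\, T(\lambda) V (I+ R_0(\lambda)V\chi)^{-1}\chi\bigr),
\]
or equivalently $\det(I - m\,\sigma\text{-type operator})$ built from the scattering matrix $S_V(\lambda) - I$. Up to finitely many zeros and poles, the zeros of $f_m$ in the upper half plane coincide with the resonances in $\Lambda_m$, counted with multiplicity. The operator inside the determinant has finite-rank-like structure through restriction to the sphere. If $E_\lambda$ is the operator $L^2(\mathbb S^{d-1}) \to L^2(\B_a)$, $g \mapsto \int e^{i\lambda\langle x,\omega\rangle}g(\omega)d\omega$, then $f_m(\lambda)=\det(I + m c\lambda^{d-2}E_\lambda^* V(\cdots)E_{-\lambda})$ on $L^2(\mathbb S^{d-1})$. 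The claimed count becomes
\[
N_{V,m}(r) \le \frac{1}{2\pi}\int \log|f_m(re^{i\theta})|\,d\theta + O(\log r),
\]
taken over an appropriate half circle. Since this is a half-plane, we use Carleman's formula (or Jensen on a disc tangent to the real axis) in place of Jensen's formula. The boundary terms on the real axis are $O(\log r)$, because there $f_m$ is controlled by $\det$ of a polynomially bounded perturbation of the unitary scattering matrix.

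The key estimate is a sharp growth bound
\[
\log|f_m(re^{i\theta})| \le h(\theta) r^d + A r^{d-1}\log r,
\]
where $h$ integrates against the Carleman kernel to exactly $c_d a^d/d$, the same constant that appears in the odd case via Dinh--Vu/Stefanov. To get it, write $\log|\det(I+K)|\le\sum_j \log(1+s_j(K))$ and bound the singular values of $E_{-\lambda}$, and hence of $K$, using the spherical harmonic decomposition. On degree-$\ell$ harmonics, $E_\lambda$ acts by multiplication by $j_\ell(\lambda |x|)$, so
\[
s_\ell \lesssim \sup_{|x|\le a} |J_{\ell+d/2-1}(\lambda |x|)|\,|J_{\ell+d/2-1}(\bar\lambda|x|)|\cdot\mathrm{poly}(r),
\]
with multiplicity $\sim \ell^{d-2}$. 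The middle factor $(I+R_0V\chi)^{-1}$ and $R_V$ are bounded polynomially in $|\lambda|$ away from a small set, by the physical-sheet estimate for $\operatorname{Im}\lambda>0$ (or via Cartan's lemma). Using the uniform Debye asymptotics of $J_\nu(\nu z)$, the sum over $\ell$ becomes a Riemann sum $\sum_\ell \ell^{d-2}\,\max(0,\ \cdot\ )$, which yields $r^d$ times the same integral defining $c_d$ as in Zworski/Stefanov. The error is $O(r^{d-1}\log r)$ from the transition region $\ell \approx ar$ and the polynomial factors. Finally we let $\epsilon\to0$, or better, choose $\chi$ cutoffs adapted so that only $|x|\le a$ enters; the kernel sees only $\operatorname{supp}V$, so $V$ can be placed on the left and right to avoid $\epsilon$-loss.

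The main obstacle is the step that turns the Bessel asymptotics into exactly $c_d a^d/d$ with an $O(r^{d-1}\log r)$ remainder. This requires getting the sharp constant in the singular value bound for $V^{1/2}E_\lambda$, including the exponential factor $e^{a|\operatorname{Im}\lambda|}$ per mode and the correct $\theta$-dependence, and checking that the Carleman weighting integrates it precisely to the Dinh--Vu constant. We would first try to reduce to the odd-dimensional computation by noting that the phase function $h(\theta)$ coincides with the one for $d$ odd, since the Debye asymptotics depend on $d$ only through $\nu=\ell+d/2-1$.
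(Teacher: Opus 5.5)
Your growth estimate is on the same track as the paper, which reduces via $f_{V,m}=\det(I+mA_V)$ to Dinh--Vu's singular-value argument with Debye asymptotics. Putting $V$ on both sides so that only $\abs{x}\le a$ enters is a legitimate way to avoid the cutoff loss. The genuine gap is the counting step.

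Carleman's formula does not compute $N_{V,m}(r)$. For $f$ holomorphic near $\set{R\le\abs{\lambda}\le r,\ 0\le\arg\lambda\le\pi}$ with zeros $r_ke^{i\theta_k}$, it reads
\[
\sum_{R<r_k<r}\Bigl(\frac{1}{r_k}-\frac{r_k}{r^2}\Bigr)\sin\theta_k=\frac{1}{\pi r}\int_0^\pi\log\abs{f(re^{i\theta})}\sin\theta\,d\theta+\frac{1}{2\pi}\int_R^r\Bigl(\frac{1}{x^2}-\frac{1}{r^2}\Bigr)\log\abs{f(x)f(-x)}\,dx+O(1).
\]
Zeros are weighted by $\sin\theta_k$, so those near $\arg\lambda=0$ or $\pi$ are almost invisible, and no bound on $N_{V,m}(r)$ follows. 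Moreover, inserting $h_d(\theta)a^dr^d$ gives $\frac{a^dr^{d-1}}{\pi}\int_0^\pi h_d(\theta)\sin\theta\,d\theta$, not $\frac{c_da^d}{d}r^d$: by \eqref{eq:204}, $c_d$ is the \emph{unweighted} average $\frac{d}{2\pi}\int_0^\pi h_d$. Jensen on discs tangent to the real axis has the same defect, since $te^{i\theta}\in\set{\abs{\lambda-iR}<R}$ iff $t<2R\sin\theta$. The inequality you wrote, an unweighted half-circle average plus real-axis terms controlled by $\abs{f}$, is something neither formula provides.

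What you need is the Jensen formula for the half-annulus, the one the paper uses in \eqref{eq:211}. Apply the argument principle on $\set{R<\abs{\lambda}<t,\ 0<\arg\lambda<\pi}$, divide by $t$, and integrate in $t$. This expresses $\int_R^r n_{V,m}(t)t^{-1}\,dt$ as $\frac{1}{2\pi}\int_0^\pi\log\abs{f_{V,m}(re^{i\theta})}\,d\theta$, plus inner-circle terms of size $O(\log r)$, plus
\[
\frac{1}{2\pi}\operatorname{Im}\int_R^r t^{-1}\Bigl(\int_{-t}^{-R}+\int_R^t\Bigr)\frac{f_{V,m}'(s)}{f_{V,m}(s)}\,ds\,dt .
\]
These last terms are the \emph{argument} increments of $f_{V,m}$ along the real axis. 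Bounds on $\abs{f_{V,m}}$ there do not control them, and in any case $I+mA_V$ is not unitary for $m\neq1$ or for complex $V$.

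The missing ingredient is the log-derivative estimate \eqref{eq:203}: $\abs{\frac{d}{ds}\log f_{V,m}(s)}\le C\abs{s}^{d-2}$ for real $s$ with $\abs{s}$ large. It bounds these terms by $O(r^{d-1})$. That is not the $O(\log r)$ you claim; for $m=1$ they involve the scattering phase, which typically grows like $\abs{s}^{d-2}$. It is, however, within the allowed $Ar^{d-1}\log r$. Combined with your growth bound and $\frac{1}{2\pi}\int_0^\pi h_d\,d\theta=\frac{c_d}{d}$, this is exactly the paper's Theorem~\ref{thm:2.3}.
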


Part of the proof of this result requires an even-$d$ analogue of \cite[Theorem 3.3]{7} (which sharpened an earlier result, \cite[Theorem 5]{19}) giving a sharp bound on a Fredholm determinant function related to the scattering matrix, see Proposition \ref{prop:2.1}.

The next theorem shows that the characteristic function of a ball achieves the optimal upper bounds in Theorem \ref{thm:1.3}. It sharpens \cite[Theorem 1.2]{6}, and we note that a similar result was proved in odd dimensions by \cite{7}, improving earlier results of \cite{24}, \cite{19}. 

\begin{thm}\label{thm:1.4}
    Let $d \geq 2$ be even and define $V(x) := V_0 \chi_{[0, a]}(\abs{x})$, $x \in \R^d$, where $V_0, a > 0$. Then for each $m \in \Z \backslash \set{0}$ we have 
    \begin{equation*}
        n_{V, m}(r) = c_da^d r^d + O(r^{d - \frac{3}{4} + \varepsilon}) \quad \text{as } r \to \infty, \text{ for any } \varepsilon > 0.
    \end{equation*}
\end{thm}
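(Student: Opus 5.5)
Because $V$ is radial, I will separate variables and reduce the problem to counting zeros of explicit one-dimensional Jost-type functions built from Bessel and Hankel functions. Decompose $L^2(\R^d)$ into spherical harmonics of degree $\ell \ge 0$; each degree has multiplicity $\mu_\ell = \binom{\ell+d-1}{d-1}-\binom{\ell+d-3}{d-1} \sim \frac{2}{(d-2)!}\ell^{d-2}$. With $\nu = \ell + \frac{d-2}{2}$ (an integer, since $d$ is even) and $\kappa = \sqrt{\lambda^2 - V_0}$, the resonances in the channel $\ell$ are the zeros on $\Lambda$ of the Wronskian matching condition
\begin{equation*}
    F_\nu(\lambda) := \kappa J_\nu'(\kappa a) H^{(1)}_\nu(\lambda a) - \lambda J_\nu(\kappa a) H^{(1)\prime}_\nu(\lambda a),
\end{equation*}
counted with multiplicity $\mu_\ell$. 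Since $d$ is even and $\nu$ is an integer, $H^{(1)}_\nu$ on the sheet $\Lambda_m$ is given by the standard continuation formula $H^{(1)}_\nu(e^{im\pi}z) = $ (linear combination of $H^{(1)}_\nu(z)$ and $H^{(2)}_\nu(z)$ with coefficients polynomial in $m$). On $\Lambda_m$ with $m\neq 0$, this combination involves $H^{(2)}_\nu$ with a nonzero coefficient (proportional to $m$). This is exactly what produces a zero count of the same leading density on every nonphysical sheet.

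Next I will localize the zeros. Using the uniform (Debye/Olver) asymptotics of $J_\nu(\nu z)$, $H^{(j)}_\nu(\nu z)$ in the large-$\nu$ regime, together with $\kappa a = \lambda a + O(1/\lambda)$, I will show the following. For $|\lambda| \le r$, zeros of $F_\nu$ on $\Lambda_m$ exist only for $\nu \lesssim a r$. For $\nu \le (1-\delta)ar$, each scaled zero $\lambda/\nu$ lies near a curve determined by the equation $\operatorname{Re}$ of the phase function $\pm\big(\sqrt{1-z^2} - \log\frac{1+\sqrt{1-z^2}}{z}\big)$ balancing the ratio $H^{(2)}_\nu/H^{(1)}_\nu$ against $J_\nu$, as in Zworski's and Stefanov's analysis in odd dimensions. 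A Rouché/argument-principle comparison with a model function $\mathcal{G}_\nu(\lambda)$ built from the leading exponential terms then gives the count of zeros of $F_\nu$ in $\{|\lambda|\le r\}\cap\Lambda_m$:
\begin{equation*}
    \#\{\text{zeros}\} = \frac{2}{\pi}\, \nu\, \Phi\!\left(\frac{ar}{\nu}\right) + O(1) + O\big(\nu^{1/4+\varepsilon}\text{-type transition error}\big),
\end{equation*}
where $\Phi$ is an explicit function coming from the phase integral.

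Then I sum over channels. Summing $\mu_\ell$ times this count over $\nu \le ar$, and converting the sum to an integral, produces the leading term $c_d a^d r^d$. The constant $c_d$ must be checked to match \eqref{eq:204}, namely the same integral that appears in \cite{7} and \cite{24} in odd dimensions. Indeed, the phase function is identical, and only the sheet structure differs. The errors are $\sum_\ell \ell^{d-2}\cdot O(1) = O(r^{d-1})$ from the regular range, plus the contribution of the turning-point region $|\nu - a|\lambda|| \lesssim \nu^{1/3}$.

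The main obstacle will be this turning-point region, where $\lambda a/\nu \approx 1$ and the Debye expansions fail. I would use Airy-type uniform asymptotics there, and only crude bounds on the number of zeros, obtained from Jensen's formula on small discs of radius $\sim \nu^{1/4}$. Choosing the transition width $\nu^{1/4+\varepsilon}$ (rather than the natural $\nu^{1/3}$) to balance the Rouché error against the Jensen count is what I expect to yield the stated error $O(r^{d-3/4+\varepsilon})$. The second delicate point is controlling the $\kappa$ versus $\lambda$ perturbation uniformly in $m$ on $\Lambda_m$. This perturbation only enters at lower order, and since $m$ is fixed it affects only the constants in the error term.
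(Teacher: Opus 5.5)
There is a genuine gap, and it sits exactly where you dismiss the difficulty. Your overall architecture is the right one: separation of variables, Olver-type uniform asymptotics, counting scaled zeros near the curve $\operatorname{Re}\rho(z)=0$, and summing over channels with multiplicities. But the $\kappa$-versus-$\lambda$ perturbation is not lower order; it is the only source of zeros.

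For integer $\nu$ one has $H^{(1)}_\nu(e^{im\pi}w)=(-1)^{m\nu}[H^{(1)}_\nu(w)-2mJ_\nu(w)]$. This gives the decomposition \eqref{eq:302}, $F_m^{(\nu)}(\nu z)=(-1)^{m\nu}[F_0^{(\nu)}(\nu z)-2mG_0^{(\nu)}(\nu z)]$. If you replace $\kappa$ by $\lambda$ (i.e.\ $\tilde z$ by $z$), then $F_0^{(\nu)}$ is the Wronskian constant $-2i/\pi$ and $G_0^{(\nu)}$ in \eqref{eq:304} vanishes identically. The channel function is then a nonzero constant on every sheet, as it must be, since $V_0=0$ has no resonances. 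Equivalently, the exponentially large leading parts of $\kappa J_\nu'(\kappa)H^{(2)}_\nu(\lambda)$ and $\lambda J_\nu(\kappa)H^{(2)\prime}_\nu(\lambda)$ cancel exactly. So a model function ``built from the leading exponential terms'' has no zeros, and a Rouch\'e comparison with it proves nothing. What actually balances $F_0^{(\nu)}\approx -2i/\pi$ is the remainder of that cancellation. That remainder has relative size $V_0\nu^{-2}(1-z^2)^{-1}$ and becomes large only because it multiplies $e^{-2\nu\rho}$.

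The missing key step is Proposition~\ref{prop:3.2}. Uniformly on $U_\nu^\delta$, $F_0^{(\nu)}=-\frac{2i}{\pi}(1+o(1))$, while $G_0^{(\nu)}$ is $O(1/\nu)$ plus the explicit, nonvanishing term of \eqref{eq:327}; this puts the equation in the normalized form \eqref{eq:328}. Proving this is the real work of the paper's argument.
First, you must expand $J_\nu(\nu\tilde z)$ about $z$ to second order in $V_0/\nu^2$ inside Olver's Airy-type expansions. This means shifting $\zeta^{\pm 1/4}$, the Airy arguments, and the coefficients $A_1,B_0,C_0,D_1$.
Second, you must show that the leftover cross term $H(\zeta,\nu)$ of \eqref{eq:345} is $O(1/\nu)+o(\nu^{-2}\abs{1-z^2}^{-1}\abs{e^{-2\nu\rho}})$. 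This works only because of the identities $b_0+b_0'=u_1+u_1'=\frac16$ in \eqref{eq:316}. Without them, the error terms are comparable to (or larger than) the main term, and you could not exclude its cancellation.

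Once \eqref{eq:328} holds, the paper does not redo the Rouch\'e/Jensen count. It quotes \cite[Section 4]{7} for the zeros in the first quadrant, and that is where the exponent $d-\frac34+\varepsilon$ comes from; your heuristic with discs of radius $\nu^{1/4}$ is not justified. It then gets the second quadrant from the symmetry \eqref{eq:206}, which replaces $m$ by $-m$. Your plan needs that step as well, since Olver's expansions are uniform only in $0\le \arg z\le \pi-\delta$.
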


We prove this theorem by refining the method of \cite{6}, using the uniform asymptotic expansions of Bessel functions. The proof of Theorem \ref{thm:1.4} is ultimately reduced to an application of a technical result used in the proof of its odd-$d$ analogue, which does not depend upon the parity of the dimension. 

Our final result is the even-dimensional analogue of the main result in \cite{7} (which improved earlier results of Christiansen, \cite{1}, \cite{3}) on analytic families of potentials, and improves \cite[Theorem 3.8]{6}. The proof uses results from the theory of several complex variables, and we review the relevant material, including the definition of \emph{pluripolar set}, in Section \ref{sec:4}. Here we note that for $p \geq 1$ an integer, a pluripolar set $E \subset \C^p$ has $\R^{2p}$ Lebesgue measure zero. Moreover, for pluripolar $E \subset \C$, the set $E \cap \R$ has one dimensional Lebesgue measure zero, see e.g., \cite{11}, \cite{14} for these facts. 

\begin{thm}\label{thm:1.5}
    Let $\Omega \subset \C^p$ be an open connected set, and let $\set{V_z}_{z \in \Omega} \subset L^\infty(\B_a; \C)$ be a family analytic in $\Omega$. If for some $m \in \Z \backslash \set{0}, z_m \in \Omega$, and $0 < \delta_m \leq 1$ we have 
    \begin{equation*}
        n_{V_{z_m}, m}(r) = c_d a^d r^d + O(r^{d - \delta_m + \varepsilon}) \quad \text{as } r \to \infty \text{ for any } \varepsilon > 0, \tag{1.4} \label{eq:104}
    \end{equation*}
    then there exists a pluripolar set $E_m \subset \Omega$ such that 
    \begin{equation*}
        n_{V_{z}, m}(r) = c_d a^d r^d + O(r^{d - \frac{\delta_m}{4} + \varepsilon}) \quad \text{as } r \to \infty \text{ for any } \varepsilon > 0, \tag{1.5} \label{eq:105}
    \end{equation*}
    for all $z \in \Omega \backslash E_m$. Moreover, if for \emph{each} $m \in \Z \backslash \set{0}$, there exist $z_m \in \Omega$ and $0 < \delta_m \leq 1$ such that \eqref{eq:104} holds, then there exists a pluripolar set $E \subset \Omega$ such that \eqref{eq:105} holds for every $m \in \Z \backslash \set{0}$ and all $z \in \Omega \backslash E$. 
\end{thm}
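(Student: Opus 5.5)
The approach follows Dinh–Vu's odd-dimensional argument, adapted to the sheet $\Lambda_m$. Resonances on $\Lambda_m$ are identified with zeros of a holomorphic function $f_m(z,\lambda)$, built from the Fredholm determinant attached to the scattering matrix (as in Proposition \ref{prop:2.1}). For $V=V_z$ this function depends holomorphically on $z\in\Omega$ and on $\lambda$ in the upper half plane $\C_+$, identified with the physical sheet. Concretely, I would use the standard even-dimensional identity: $\lambda\in\Lambda_m$ is a resonance of $V$ iff $e^{im\pi}\lambda$ corresponds to a zero of $\det(I + mT_V(\lambda))$, with $\lambda \in \C_+$, where $T_V(\lambda)=S_V(\lambda)-I$ up to a conjugation. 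Multiplicities agree, up to finitely many exceptional points near $0$. Then $n_{V,m}(r)$ counts the zeros of $F_m(z,\lambda):=\det(I+mT_{V_z}(\lambda))$ in $\{\lambda\in\C_+ : |\lambda|<r\}$.

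\textbf{Step 1: uniform upper bounds.} Proposition \ref{prop:2.1} should give, locally uniformly for $z$ in compact subsets $K\subset\Omega$,
\[
\log|F_m(z,\lambda)|\le c_d a^d |\lambda|^d\, h(\arg\lambda)+C_K|\lambda|^{d-1}\log|\lambda|,
\]
with an explicit indicator $h$ on $(0,\pi)$. The constant in Theorem \ref{thm:1.3} comes from Jensen/Carleman applied to this bound. I would also use the complementary lower bound $\log|F_m|\ge -C$ along a fixed ray, say $\arg\lambda=\pi/2$, where $T_V$ is small for large $|\lambda|$, so $F_m\to 1$.

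\textbf{Step 2: the counting function as a potential.} For $z$ fixed, the Carleman formula in the half disc expresses $n_{V_z,m}(r)$, or rather a smoothed version, through boundary integrals of $\log|F_m(z,\cdot)|$. Define
\[
u_r(z):= r^{-d}\int \log|F_m(z,re^{i\theta})|\,\phi(\theta)\,d\theta ,
\]
with the Carleman weight $\phi$. Each $u_r$ is plurisubharmonic in $z$, and by Step 1 the family is locally uniformly bounded above by the extremal value $c_da^d$ (after normalization). Hypothesis \eqref{eq:104} says that at $z_m$ the value is attained, with error $O(r^{-\delta_m+\varepsilon})$.

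\textbf{Step 3: the pluripolar exceptional set.} This is the main obstacle. The task is to pass from "maximal at one point with rate $r^{-\delta_m}$" to "maximal off a pluripolar set with rate $r^{-\delta_m/4}$". I would follow Dinh–Vu's quantitative lemma. For a sequence of psh functions $v_j = u_{r_j} - c$ with $r_j=2^j$, we have $v_j\le C r_j^{-1}\log r_j$ locally, and $v_j(z_m)\ge -C r_j^{-\delta_m+\varepsilon}$. By a Hartogs-type argument, $\limsup_j r_j^{\delta_m/2}v_j$ is bounded below off the pluripolar set
\[
E=\Big\{\Big(\sum_j r_j^{\delta_m/2-\varepsilon}v_j\Big)^* = -\infty\Big\}.
\]
This gives a good lower bound on the smoothed counting function on dyadic scales. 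The Carleman formula loses a square root when converting boundary integrals of $\log|F_m|$ to counts, since one must choose a smoothing width $\sim r^{-\delta/2}$. Combining this with the halving from the Hartogs step produces the exponent $\delta_m/4$. The upper bound of Theorem \ref{thm:1.3} is then used to convert two-sided integrated information into the asymptotic \eqref{eq:105} for $n_{V_z,m}$ itself, by a standard Tauberian comparison of $N$ on $[r,r(1+r^{-\eta})]$.

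\textbf{Step 4: all sheets.} For the final statement, take $E=\bigcup_{m\neq0}E_m$. A countable union of pluripolar sets is pluripolar, which completes the proof.
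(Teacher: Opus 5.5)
Your overall architecture matches the paper's. It uses the p.s.h.\ functions $\psi_m(r,z)=\frac{1}{2\pi r^d}\int_0^\pi\log|f_{V_z,m}(re^{i\theta})|\,d\theta-\frac{c_da^d}{d}$, the upper bound from Proposition~\ref{prop:2.1}, and the lower bound at $z_m$ from (a)$\Rightarrow$(b) of Lemma~\ref{lem:4.2} together with Theorem~\ref{thm:2.3}. It then applies a Dinh--Vu summation lemma and takes countable unions of pluripolar sets.

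The genuine gap is in Step 3, where you sample only at $r_j=2^j$. For a geometric sequence, $\sum_j r_j^{\beta}v_j$ is locally bounded above and finite at $z_m$ for every $\beta<\delta_m$. So off a pluripolar set you do get $v_j(z)=o(r_j^{-\beta})$, but only at the radii $2^j$, and nothing you do afterwards recovers the intermediate radii:
\begin{itemize}
\item Monotonicity of $N_{V_z,m}$ only confines $N_{V_z,m}(r)$, for $2^j\le r\le 2^{j+1}$, to an interval of length $\asymp r^d$.
\item Theorem~\ref{thm:1.3} controls only the upper side.
\item Your Tauberian comparison on $[r,r(1+r^{-\eta})]$ presupposes precise values of $N_{V_z,m}$ at radii with relative spacing $r^{-\eta}$, which a dyadic sample does not provide. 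Adjoining the sequences $q2^j$, $q$ rational, does not help either, since the implied constants are not uniform in $q$.
\end{itemize}

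The missing idea is the trade-off the paper makes: sample at $r=n^k$ with $k=2/\delta_m$. Consecutive radii then satisfy $(n+1)^{kd}-n^{kd}\lesssim n^{kd-1}=r^{d-\delta_m/2}$, so monotonicity of $N_{V_z,m}$ transfers \eqref{eq:404} from the sample to all $r$. The price is paid in Lemma~\ref{lem:4.1}. In the index $n$ the lower bound at $z_m$ is only $\Phi_n(z_m)\ge -Cn^{-k(\delta_m-\varepsilon)}$, i.e.\ $\gamma\approx 2$. Then $\alpha<\gamma-1$ gives $\Phi_n=o(n^{-1+O(\varepsilon)})=o(r^{-\delta_m/2+O(\varepsilon)})$. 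Balancing the interpolation loss $r^{-1/k}$ against the summation loss $r^{-\delta_m+1/k}$ is exactly where $\delta_m\to\delta_m/2$ comes from. The second halving, $\delta_m/2\to\delta_m/4$, is (b)$\Rightarrow$(c) of Lemma~\ref{lem:4.2}.

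Your bookkeeping instead attributes a halving to a ``Carleman smoothing width $r^{-\delta/2}$'', which is not a real loss. The half-annulus Jensen formula behind Theorem~\ref{thm:2.3} recovers $N_{V_z,m}(r)$ from the \emph{unweighted} $\int_0^\pi\log|f_{V_z,m}(re^{i\theta})|\,d\theta$ up to $O(r^{d-1})$, with the real-axis terms controlled by \eqref{eq:203}. Carleman's formula proper, with weight $\sin\theta$, counts zeros with weights $\sin\theta_k(r_k^{-1}-r_kR^{-2})$ and would not connect to \eqref{eq:104}.

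Finally, your claimed lower bound $\log|F_m|\ge -C$ on $\arg\lambda=\pi/2$ is false, since $T(\lambda)$ is exponentially large off the real axis. You never actually use it, though.
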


We will use this theorem in the proof of Theorem \ref{thm:1.2}, but note that together with Theorem \ref{thm:1.4} it gives an interesting alternative statement that there are many potentials achieving maximal resonance density in even dimensions. It shows that such potentials are generic in the sense of pluripotential theory, see \cite{7} and \cite{Dn} for more discussion on this notion of genericity. Notice also that compared to its odd-$d$ analogue in \cite[Theorem 1.2]{7}, our version removes the requirement that the family be uniformly bounded. An analogue of our proof could be used to remove the restriction there as well. 

From a technical point of view, the primary difference between the proofs of our even-$d$ results and those of their odd-$d$ analogues is that in odd dimensions one may work with a single meromorphic function on $\C$, namely the scattering determinant, whose zeros with finitely many exceptions correspond to resonances. For even dimensions this is no longer true. We must work with a family of functions, each holomorphic in an open half-plane, and whose zeros correspond to resonances on one of the open sheets, $\Lambda_m$. Without analyticity near zero, to count resonances we must use an argument principle applied in a half-annular region, and this breaks some uniformity present in the proofs of the odd-$d$ analogues. However, building off of the approach of \cite{6}, which in turn uses ideas of \cite{21}, \cite{22}, and \cite{F}, we are able to remedy these issues and obtain our results. This modified approach would also work in the odd-$d$ case, but for even dimensions it is essential.

The investigation of the existence and asymptotic number of resonances in potential scattering has a long, rich history. For odd dimensions, we refer to \cite{18}, \cite{7} and references therein, \cite[Ch.'s 2 and 3]{8}, and the survey \cite{26}. As mentioned above, Theorem \ref{thm:1.1} is a continuation of several previous results on resonances for generic potentials, see \cite{1}, \cite{5}, \cite{3}, \cite{7}. 

Until now, corresponding results in even dimensions had only partially been obtained, and as indicated above, this paper fills in some gaps. In addition to the papers already discussed, we remark that further existence and quantitative lower bounds for resonances in even dimensions may be found in \cite{4}, \cite{20}, \cite{15}, and \cite{Chen}. 

Despite some differences in the theories for even and odd dimensions, Theorems \ref{thm:1.1} and \ref{thm:1.2} bring known results on generic potentials to equal footing, and Theorems \ref{thm:1.3} and \ref{thm:1.4} give a sharp upper bound on the asymptotic number of resonances in even dimensions directly analogous to that known in odd dimensions (compare to \cite{24}, \cite{25}, \cite{19}).

The paper is organized as follows: In Section \ref{sec:2} we define the determinant functions from \cite{6} used to obtain our results, relate them to the scattering matrix, and gather some important estimates. Theorem \ref{thm:1.3} will follow as a corollary to these estimates and a Jensen-type formula for a half-annulus. Section \ref{sec:3} gives the proof of Theorem \ref{thm:1.4}, beginning with some preliminaries on Bessel functions. In Section \ref{sec:4} we apply results from the theory of several complex variables to prove Theorem \ref{thm:1.5}. We then provide a modification of the argument of \cite{5} showing how Theorems \ref{thm:1.3}, \ref{thm:1.4}, and \ref{thm:1.5}, and their odd-dimensional analogues combine to give Theorems \ref{thm:1.2} and \ref{thm:1.1}, respectively.

\noindent \textbf{Acknowledgments.} It is a pleasure to thank Tanya Christiansen for suggesting the project and for many helpful discussions throughout the writing of this paper. We also thank Dan Cunningham for logistical help, and Ben Jeffers and the Prison Mathematics Project for handling the typing of the original manuscript.

\section{PRELIMINARIES AND PROOF OF THEOREM \ref{thm:1.3}} \label{sec:2}

In this section we gather some technical preliminaries, mostly following \cite{6}. In any even dimension, the free resolvent $R_0(\lambda):= (-\Delta - \lambda^2)^{-1}$ is holomorphic in the physical plane, $0 < \arg \lambda < \pi$. For any $V \in L_c^\infty(\R^d)$ the resolvent $R_V(\lambda) := (-\Delta + V - \lambda^2)^{-1}$, initially defined in $0 < \arg \lambda < \pi$, $\abs{\lambda} > C$ for large $C$, has a meromorphic continuation to $\Lambda = \exp^{-1}(\C \backslash \set{0})$ as an operator taking $L_c^2(\R^d)$ to $H^2_{\operatorname{loc}}(\R^d)$. Resonances are the poles of this meromorphic continuation.

Next we define the operator $T(\lambda)$ through the Schwartz kernel
\begin{equation*}
    T(\lambda, x, y) := i \pi(2\pi)^{-d} \lambda^{d-2} \int_{\mathbb S^{d-1}} e^{ i\lambda \langle x-y, \omega \rangle} d \omega.  \tag{2.1} \label{eq:201}
\end{equation*}
Then for any $\chi_V \in C_c^\infty(\R^d)$ with $\chi_V V = V$, \cite[Section 2]{6} shows that for each $m \in \Z \backslash \set{0}$, the function 
\begin{equation*}
    f_{V, m}(\lambda) := \det(I - m(I + VR_0(\lambda) \chi_V)^{-1} VT(\lambda)\chi_V) \tag{2.2} \label{eq:202}
\end{equation*}
is well-defined and meromorphic in $\Lambda_0 = \set{0 < \arg \lambda < \pi}$ with at most finitely many poles. With at most finitely many exceptions, $\lambda \in \Lambda_0$ is a zero of $f_{V, m}$ if and only if $e^{i \pi m} \lambda \in \Lambda_m$ is a resonance, with agreement of multiplicities. Moreover, there exists $c_0 > 0$ depending only on $\norm{V}_\infty$ such that $f_{V, m}$ has analytic continuation to a neighborhood of $\set{\lambda \in \Lambda : 0 \leq \arg \lambda \leq \pi, \abs{\lambda} > c_0}$.

By \cite[proof of Lemma 3.4]{6} (see \cite[(24)]{6} and the two inequalities following it), we have the following: There exists $C_{V, m} > 0$ depending only on $\norm{V}_\infty$ and $m \in \Z \backslash\set{0}$, so that 
\begin{equation*}
    \abs{\frac{d}{ds} \log f_{V, m}(s)} \leq C_{V, m}\abs{s}^{d-2} \tag{2.3} \label{eq:203}
\end{equation*}
for $\abs{s} > C_{V, m}$ and either $\arg s = 0$ or $\arg s = \pi$. 

Next we wish to give an analogue of \cite[Theorem 3.3]{7} which refined an earlier bound of \cite{19} and gave an estimate on the scattering determinant in odd dimensions. To state it, we let $\C_+$ denote the open upper half plane and define the functions 
\begin{equation*}
    \rho(z) := \log \frac{1 + \sqrt{1 - z^2}}{z} - \sqrt{1 - z^2}, \quad z \in \overline{\C}_+ \backslash \set{0}
\end{equation*}
and 
\begin{equation*}
    h_d(\theta) := \frac{4}{(d-2)!} \int_0^\infty \frac{\max (- \operatorname{Re} \rho(te^{i \theta}), 0)}{t^{d+1}}dt, \quad \theta \in [0, \pi].
\end{equation*}

For future reference, we also follow Stefanov, \cite{19}, defining 
\begin{equation*}
    c_d := \frac{d}{2\pi} \int_0^\pi h_d(\theta) d\theta = \frac{2d}{\pi(d-2)!} \int_{\operatorname{Im} z > 0} \frac{\max (-\operatorname{Re} \rho(z), 0)}{\abs{z}^{d +2}}dxdy \tag{2.4} \label{eq:204}
\end{equation*}
which is the dimensional constant first mentioned after \eqref{eq:102} and used throughout the paper. 

\begin{prop}\label{prop:2.1}
    For any $V \in L^\infty(\B_a), m \in \Z \backslash \set{0}$, there exist constants $A_{V, m}, C_{V, m}$ depending only on $\norm{V}_\infty$ and $m$ such that 
    \begin{equation*}
        \log \abs{f_{V, m}(re^{i \theta})} \leq h_d(\theta)a^dr^d + A_{V, m}r^{d-1}\log r
    \end{equation*}
    for $r > C_{V, m}$ and $\theta \in [0, \pi]$. 
\end{prop}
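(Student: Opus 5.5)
Writing $\lambda = re^{i\theta}$, the plan is to follow the odd-dimensional argument of Dinh--Vu \cite[Theorem 3.3]{7}, which refines Stefanov \cite{19}. It has three steps: a Weyl-type bound for the Fredholm determinant, a spherical-harmonic decomposition of the finite-rank-like operator $VT(\lambda)\chi_V$, and sharp bounds on Bessel functions via the function $\rho$. The key point is that $T(\lambda)$ has kernel proportional to $\lambda^{d-2}\int_{\mathbb S^{d-1}} e^{i\lambda\langle x-y,\omega\rangle}d\omega$. This factors as $T(\lambda) = c\lambda^{d-2}\mathbb E(\lambda)^*\mathbb E(\lambda)$, with $\mathbb E(\lambda)\colon L^2(\B_a)\to L^2(\mathbb S^{d-1})$, $(\mathbb E(\lambda)u)(\omega) = \int e^{-i\lambda\langle y,\omega\rangle}u(y)\,dy$ (up to conjugating $\lambda$ on one side).

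\textbf{Step 1: Weyl inequality.} Since $\det(I-A)$ with $A = m(I+VR_0\chi_V)^{-1}VT\chi_V$, we have
\[
\log\abs{f_{V,m}(\lambda)} \le \sum_j \log(1+s_j(A)).
\]
For $\abs{\lambda}$ large in the closed upper half plane, $\norm{(I+VR_0(\lambda)\chi_V)^{-1}}$ is bounded by a constant depending on $\norm{V}_\infty$. Here I would use $\norm{\chi R_0(\lambda)\chi} = O(\abs{\lambda}^{-1})$ in $\overline{\C}_+$ away from $0$, together with the continuation up to $\arg\lambda=0,\pi$ mentioned above. Hence $s_j(A)\le C\abs{m}\,\norm{V}_\infty\, s_j(\chi_{\B_a}T(\lambda)\chi_{\B_a})$. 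Since $\log(1+Cs)\le \log(1+s)+\log(1+C)$, this costs $O(\#\{j: s_j\gtrsim r^{-N}\})\cdot O(1)$ plus a tail, which should be $O(r^{d-1}\log r)$.

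\textbf{Step 2: spherical harmonics and Bessel functions.} Expand $e^{-i\lambda\langle y,\omega\rangle}$ in spherical harmonics. Then $\chi_{\B_a}T(\lambda)\chi_{\B_a}$ is diagonalized: on the degree-$\ell$ harmonics, of multiplicity $\mu_\ell\sim \frac{2\ell^{d-2}}{(d-2)!}$, it acts by an operator in the radial variable of rank one. Its single singular value is
\[
\sigma_\ell(\lambda) = c\abs{\lambda}^{d-2}\int_0^a \abs{j_\ell(\lambda t)}^2 t^{d-1}\,dt,
\]
where $j_\ell(z)=z^{1-d/2}J_{\ell+d/2-1}(z)$ and $\abs{J_\nu(\lambda t)}^2$ must be handled for complex $\lambda$. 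It follows that
\[
\log\abs{f_{V,m}}\le \sum_\ell \mu_\ell \log(1+C\sigma_\ell(\lambda)) + O(r^{d-1}\log r).
\]

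\textbf{Step 3: uniform Debye asymptotics.} Use the uniform bound $\abs{J_\nu(\nu z)}\le C\exp(-\nu\operatorname{Re}\rho(z))$, valid uniformly for $z\in\overline{\C}_+\setminus\{0\}$ (for real $z>1$ we have $\operatorname{Re}\rho\le 0$ with the correct growth). With $\nu=\ell+d/2-1$ and $z=\lambda a/\nu$, this gives
\[
\log(1+C\sigma_\ell)\le 2\nu\max\bigl(-\operatorname{Re}\rho(\lambda a/\nu),0\bigr)+O(\log r).
\]
The $O(\log r)$ term is needed only for $\ell\lesssim r$, since for $\ell\gg r$ we have $\sigma_\ell$ exponentially small. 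Summing,
\[
\sum_\ell \frac{2\ell^{d-2}}{(d-2)!}\,2\ell\max\bigl(-\operatorname{Re}\rho(re^{i\theta}a/\ell),0\bigr),
\]
which is compared to the integral $\frac{4}{(d-2)!}\int_0^\infty \ell^{d-1}\max(\cdots)\,d\ell$. The substitution $t=ra/\ell$ turns this integral into exactly $h_d(\theta)a^dr^d$. The Riemann-sum error and the $\mu_\ell$ correction ($\mu_\ell$ is exact up to $O(\ell^{d-3})$) contribute $O(r^{d-1}\log r)$.

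The main obstacle is Step 3's uniformity. It requires a Bessel bound uniform in both $\nu$ and $z$ across the whole closed half plane, including near the turning point $z=1$ and near the boundary rays $\arg\lambda=0,\pi$. It also requires control of the Riemann-sum error, since $\max(-\operatorname{Re}\rho,0)$ is only Lipschitz away from $z=1$ and has a logarithmic singularity at $z=0$. To handle this I would split $\ell$ into the ranges $\ell\le r^{1-\epsilon}$ (where $\sigma_\ell$ contributes at most $O(\ell^{d-2}\cdot \ell\log r)$ directly), $\ell$ comparable to $ar$, and $\ell\ge Car$ (where contributions are negligible). This mirrors \cite{19} and \cite{7}.
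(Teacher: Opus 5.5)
Your main line (Steps 1--3) is correct, but it is a genuinely different route from the paper's.

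\textbf{Comparison with the paper.} The paper first proves Lemma~\ref{lem:2.2}, identifying $f_{V,m}(\lambda)$ with $\det(I+mA_V(\lambda))$, where $A_V$ is the commutator representation of $S_V(\lambda)-I$. It then reduces to $\theta\in[0,\pi/2]$ via \eqref{eq:206} and applies the proof of Dinh--Vu's Theorem~3.3 to that representation. The only new input is $\|\rho R_V(re^{i\theta})\rho\|_{H^{-1}\to H^1}=O(r^2)$.

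You instead work directly with \eqref{eq:202}. On the physical sheet the Neumann series gives $\|(I+VR_0\chi_V)^{-1}\|\le 2$ for $|\lambda|\ge C\|V\|_\infty$, which is the same fact behind $c_0$. So all growth comes from $T(\lambda)$. The operator $\mathbbm{1}_{\B_a}T(\lambda)\mathbbm{1}_{\B_a}$ is block diagonal in spherical harmonics with rank-one blocks, whose singular values are exactly $\pi\int_0^a t\,|J_\nu(\lambda t)|^2\,dt$ with multiplicity $M(\ell)$.

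Your route is self-contained: it needs neither the scattering matrix nor any bound on $R_V$, and it makes the dependence of the constants on $\|V\|_\infty$ and $m$ transparent. The paper's route is shorter only because it outsources the Bessel and Riemann-sum analysis, which you must now do yourself.

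\textbf{Points to make explicit.}
First, the right-hand cutoff in \eqref{eq:202} is $\chi_V$, whose support is larger than $\overline{\B}_a$. To get $\mathbbm{1}_{\B_a}$ on both sides you need cyclicity of the determinant, together with $V=\mathbbm{1}_{\B_a}V$ and $\chi_V=1$ on $\overline{\B}_a$. Otherwise one factor sees a radius larger than $a$ and the sharp constant is lost.

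Second, bounding the radial integral by its endpoint value uses that $t\mapsto-\operatorname{Re}\rho(\lambda t/\nu)$ is nondecreasing. This holds because $\frac{d}{dt}\bigl(-\rho(tz)\bigr)=\sqrt{1-t^2z^2}/t$ has nonnegative real part.

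Third, for the Bessel bound on the closed half-plane, $|J_\nu(\nu z)|=|J_\nu(-\nu\bar z)|$ and $\operatorname{Re}\rho(-\bar z)=\operatorname{Re}\rho(z)$ let you restrict to $0\le\arg z\le\pi/2$, where Olver's expansions apply. This is the Bessel-level counterpart of the paper's reduction via \eqref{eq:206}.

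\textbf{An error in your last paragraph.} The range $\ell\le r^{1-\epsilon}$ is not an error range. There $\sigma_\ell$ is of size $e^{2ar\sin\theta}$ up to powers of $r$ (for $\theta=\pi/2$ use $J_\nu(irt)=i^\nu I_\nu(rt)$). So $\log(1+C\sigma_\ell)\approx 2ar\sin\theta$, not $O(\ell\log r)$. For $\theta$ away from $0,\pi$ and small $\epsilon$, these terms add up to order $r^{d-\epsilon(d-1)}$, far more than $r^{d-1}\log r$. They belong to the main term and must be included in the sum-to-integral comparison.

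The repair is easy, and no splitting is needed. Set $w=are^{i\theta}$ and $g(\nu)=\nu^{d-1}\max(-\operatorname{Re}\rho(w/\nu),0)$. The identity $\frac{d}{d\nu}\rho(w/\nu)=\sqrt{1-(w/\nu)^2}/\nu$ and the bound $\max(-\operatorname{Re}\rho(z),0)\le 1+2|z|$ give $|g'(\nu)|\lesssim\nu^{d-2}+r\nu^{d-3}$ almost everywhere. Also $g$ vanishes for $\nu\ge Cr$. Hence the total variation of $g$ is $O(r^{d-1}\log r)$ uniformly in $\theta$, with the logarithm appearing only for $d=2$. This controls the error in replacing the sum over $\ell$ by $\int d\nu$.

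Finally, $\max(-\operatorname{Re}\rho,0)$ has no logarithmic singularity at $z=0$, since it vanishes for $|z|$ small. The turning point $z=1$ causes no difficulty either, since $\rho'$ is bounded there.
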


We will reduce the proof of Proposition \ref{prop:2.1} to an application of the proof of \cite[Theorem 3.3]{7}. Our first step is to relate the function $f_{V, m}$ to the representation of the scattering matrix used in \cite{7}. 

Let $a > 0, V \in L^\infty(\B_a)$, and select $\chi_i \in C_c^\infty(\R^d, [0, 1]), i=1, 2$ such that for some $R > a$, $\chi_i|_{\B_R} = 1, \chi_2|_{\operatorname{supp} \chi_1} = 1$. We then define the operators $E_\pm(\lambda)\colon L_c^2(\R^d) \to L^2(\mathbb S^{d-1})$ by the Schwartz kernel 
\begin{equation*}
    E_\pm(\lambda, \omega, x) = e^{\pm i \lambda \langle \omega, x \rangle} \quad \text{for } x \in \R^d, \omega \in \mathbb S^{d-1}. \tag{2.5} \label{eq:205}
\end{equation*}
The transpose operators are then defined as the operators $E_\pm^t(\lambda) \colon L^2(\mathbb S^{d-1}) \to L_{\operatorname{loc}}^2(\R^d)$ having this same Schwartz kernel. We then recall the definition of the scattering matrix 
\begin{equation*}
    S_V(\lambda) = I + A_V(\lambda)
\end{equation*}
where 
\begin{equation*}
    A_V(\lambda) = (2\pi)^{-d +1}(2i)^{-1}\lambda^{d-2}E_-(\lambda)[\Delta, \chi_1] R_V(\lambda)[\Delta, \chi_2]E_+^t(\lambda)
\end{equation*}
(see \cite[Theorem 4.26]{8} which shows that this representation holds for more general black box operators). For $d$ even, $S_V(\lambda)$ and $A_V(\lambda)$ are meromorphic in $\lambda \in \Lambda$ as operators on $L^2(\mathbb S^{d-1})$. 

\begin{lm}\label{lem:2.2}
    For any $V \in L^\infty(\B_a)$, and $m \in \Z \backslash \set{0}$, we may write 
    \begin{equation*}
        f_{V, m}(\lambda) = \det(I + m A_V(\lambda)), \quad \lambda \in \Lambda_0.
    \end{equation*}
\end{lm}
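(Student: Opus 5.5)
The plan is to factor $T(\lambda)$ through the plane-wave operators \eqref{eq:205}, move the determinant from $L^2(\R^d)$ to $L^2(\mathbb S^{d-1})$ with the identity $\det(I+AB)=\det(I+BA)$, and then identify the resulting operator on the sphere with $A_V(\lambda)$ by an integration-by-parts (Green's formula) computation. By analytic continuation it is enough to prove the identity for $\lambda\in\Lambda_0$ with $\operatorname{Im}\lambda$ large. There $R_V(\lambda)$ and $(I+VR_0(\lambda)\chi_V)^{-1}$ are holomorphic, and all operators with smooth kernels on compact sets are trace class. Both sides are meromorphic in $\Lambda_0$, so equality on an open set gives equality everywhere.

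\textbf{Step 1 (factorization and cyclicity).} From \eqref{eq:201} and \eqref{eq:205},
\[
T(\lambda)=i\pi(2\pi)^{-d}\lambda^{d-2}E_+^t(\lambda)E_-(\lambda),
\]
since $e^{i\lambda\langle x-y,\omega\rangle}=e^{i\lambda\langle\omega,x\rangle}e^{-i\lambda\langle\omega,y\rangle}$. Write $B=(I+VR_0\chi_V)^{-1}VE_+^t\colon L^2(\mathbb S^{d-1})\to L^2_c$ and $C=E_-\chi_V$. Both have smooth compactly localized kernels, hence are Hilbert--Schmidt, so $\det(I+AB)=\det(I+BA)$ applies. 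It gives
\[
f_{V,m}(\lambda)=\det\nolimits_{L^2(\mathbb S^{d-1})}\bigl(I-m\,i\pi(2\pi)^{-d}\lambda^{d-2}E_-\chi_V(I+VR_0\chi_V)^{-1}VE_+^t\bigr).
\]
We have $i\pi(2\pi)^{-d}=\tfrac{i}{2}(2\pi)^{1-d}$ and $-(2i)^{-1}=\tfrac i2$. Comparing with the constant in $A_V$, it therefore suffices to show
\[
E_-[\Delta,\chi_1]R_V[\Delta,\chi_2]E_+^t=E_-\chi_V(I+VR_0\chi_V)^{-1}VE_+^t. \tag{$\ast$}
\]

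\textbf{Step 2 (Green's identity computation).} Fix $g$, put $u=E_+^tg$ and $w=e^{-i\lambda\langle\omega,\cdot\rangle}$; both satisfy $(\Delta+\lambda^2)u=(\Delta+\lambda^2)w=0$. Then
\[
[\Delta,\chi_2]u=(\Delta+\lambda^2)(\chi_2u)=-(-\Delta+V-\lambda^2)(\chi_2u)+Vu,
\]
using $\chi_2V=V$. Hence $R_V[\Delta,\chi_2]u=-\chi_2u+R_VVu$.

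For the first term, $[\Delta,\chi_1]\chi_2u=[\Delta,\chi_1]u=(\Delta+\lambda^2)(\chi_1u)$ has compact support. Integrating by parts against $w$ gives $\int w(\Delta+\lambda^2)(\chi_1u)=0$.

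For the second term, use $R_VV=R_0(I+VR_0)^{-1}V$ and set $h=(I+VR_0)^{-1}Vu$, which is supported in $\operatorname{supp}V$. Then $[\Delta,\chi_1]R_0h=(\Delta+\lambda^2)(\chi_1R_0h)+h$, because $(\Delta+\lambda^2)R_0h=-h$ and $\chi_1h=h$. Pairing with $w$ kills the first piece, since $\chi_1R_0h$ is compactly supported. This leaves $E_-h$. Inserting $\chi_V$ (allowed because $V=V\chi_V=\chi_VV$) turns $(I+VR_0)^{-1}V$ into $\chi_V(I+VR_0\chi_V)^{-1}V$, which proves $(\ast)$.

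\textbf{Main obstacle.} I expect the delicate point to be the bookkeeping rather than any single estimate. First, one has to get the sign and constant conventions in $A_V$ and $T$ to match exactly. Second, one has to justify the trace-class and analytic-continuation claims: $(\ast)$ is derived where the physical-sheet resolvents are honest bounded operators, and the determinants on $L^2(\R^d)$ and $L^2(\mathbb S^{d-1})$ must agree as meromorphic functions on all of $\Lambda_0$. For this I would check the constant comparison carefully and invoke uniqueness of meromorphic continuation.
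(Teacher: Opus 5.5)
Your argument is correct. Its reduction is exactly the paper's: you factor $T(\lambda)=i\pi(2\pi)^{-d}\lambda^{d-2}E_+^t(\lambda)E_-(\lambda)$, apply cyclicity of the determinant, and reduce to the operator identity $(\ast)$ on $L^2(\mathbb S^{d-1})$. The extra $\chi_V$ on the right of $(\ast)$ is harmless, since $(I+VR_0\chi_V)^{-1}V$ already has range supported in $\operatorname{supp}V$. Your constant bookkeeping also checks out.

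The two proofs differ only in how $(\ast)$ is justified. The paper does not compute it. It appeals to the proof of \cite[Theorem 3.44]{8} together with the boundary pairing formula \cite[Theorem 4.23]{8}, so it goes through the scattering-theoretic derivation of the representation $S_V=I+A_V$. It then continues the identity analytically to a neighborhood of $\set{\lambda\in\Lambda : 0\le\arg\lambda\le\pi,\ \abs{\lambda}>c_0}$.

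You instead verify $(\ast)$ directly for $\operatorname{Im}\lambda$ large, where everything is a Neumann series. You write $[\Delta,\chi_2]u=(\Delta+\lambda^2)(\chi_2u)$ and use $R_VV=R_0(I+VR_0)^{-1}V$. You then kill the compactly supported terms $(\Delta+\lambda^2)(\chi_1u)$ and $(\Delta+\lambda^2)(\chi_1R_0h)$ by integrating by parts against $e^{-i\lambda\langle\omega,\cdot\rangle}$. Uniqueness of meromorphic continuation in $\Lambda_0$ finishes the argument.

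Your route is self-contained and needs no boundary pairing formula. It also makes the independence from the parity of $d$ transparent. The cited route additionally ties $A_V$ to the scattering matrix defined through generalized eigenfunctions and gives the identity up to the boundary rays, but neither is needed for the lemma as stated on $\Lambda_0$.

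One small imprecision: $B=(I+VR_0\chi_V)^{-1}VE_+^t$ does not have a smooth kernel. It is nevertheless Hilbert--Schmidt, because $VE_+^t$ has kernel in $L^2(\R^d\times\mathbb S^{d-1})$ and $(I+VR_0\chi_V)^{-1}$ is bounded. That is all cyclicity requires.
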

\begin{proof}
    First notice that from \eqref{eq:201} and \eqref{eq:205}
    \begin{equation*}
        \chi_VT(\lambda) \chi_V = -(2\pi)^{-d + 1}(2i)^{-1} \lambda^{d-2} \chi_V E_+^t(\lambda) E_-(\lambda) \chi_V.
    \end{equation*}
Hence, by cyclicity of the determinant (e.g., \cite[B.5.18]{8}), it suffices to show that as operators on $L^2(\mathbb S^{d-1})$, we have 
\begin{equation*}
    E_-(\lambda)(I + VR_0(\lambda)\chi_V)^{-1}VE_+^t(\lambda) = E_-(\lambda)[\Delta, \chi_1]R_V(\lambda)[\Delta, \chi_2]E_+^t(\lambda), \quad \lambda \in \Lambda_0.
\end{equation*}
But this follows directly from the proof of \cite[Theorem 3.44]{8}, which makes no use of the parity of the dimension, provided we use the boundary pairing formula in \cite[Theorem 4.23]{8} and at the end of the proof we analytically continue the identity to a neighborhood of $\set{\lambda \in \Lambda : 0 \leq \arg \lambda \leq \pi, \abs{\lambda} > c_0}$ rather than to $\C$. 
\end{proof}

The remainder of the proof of Proposition \ref{prop:2.1} now follows essentially as in \cite[Section 3]{7}. For $\lambda \in \Lambda$ we define $\overline{\lambda} := \abs{\lambda}e^{-i \arg \lambda}$. Then it suffices to prove the bound for $\theta \in [0, \frac{\pi}{2}]$, since using $T(- \overline{\lambda}) = -\overline{T(\lambda)}$, we have 
\begin{equation*}
    \overline{f_{\overline V, -m}(\lambda)} = f_{V, m}(-\overline{\lambda}), \quad \lambda \in \Lambda_0. \tag{2.6} \label{eq:206}
\end{equation*}
In view of Lemma \ref{lem:2.2}, all that remains in order to apply the proof of \cite[Theorem 3.3]{7} is to note that for any $\rho \in C_c^\infty(\R^d)$, standard estimates for solutions to elliptic equations provides the bound 
\begin{equation*}
    \norm{\rho R_V(re^{i \theta}) \rho}_{H^{-1} \to H^1} = O(r^2).
\end{equation*}
It is straightforward to check that the remainder of the proof of Proposition \ref{prop:2.1} follows by applying \cite[proof of Theorem 3.3]{7} to the representation of $f_{V, m}$ given in Lemma \ref{lem:2.2}. 

We are now ready to use the preliminaries established above to prove a theorem which implies Theorem \ref{thm:1.3}. An odd dimensional version of the first part of this theorem was obtained by Christiansen, \cite[(3.2)]{3}, and see also Stefanov, \cite[Proposition 2]{19}, and Dinh-Vu, \cite[Proposition 3.2]{7}. We state it for an analytic family of potentials because it will be useful to us later in Section \ref{sec:4}, where we use it to prove Theorem \ref{thm:1.5}. 

\begin{thm}\label{thm:2.3}
    Let $\Omega \subset \C^p$ be an open connected set, and let $\set{V_z}_{z \in \Omega} \subset L^\infty(\B_a; \C)$ be a family analytic in $\Omega$. Then for any $\Omega' \subset\subset \Omega$ and fixed $m \in \Z \backslash \set{0}$, there exists a constant $A_{\Omega', m}$ such that 
    \begin{equation*}
        \abs{N_{V_z, m}(r) - \frac{1}{2\pi} \int_0^\pi \log \abs{f_{V_z, m}(re^{i \theta})} d\theta} \leq A_{\Omega', m} r^{d-1}, \quad z \in \Omega' \tag{2.7} \label{eq:207}
    \end{equation*}
    for large enough $r$. In particular, we have for some $A_{\Omega', m}'$, 
    \begin{equation*}
        N_{V_z, m}(r) \leq \frac{c_da^d}{d}r^d + A'_{\Omega', m}r^{d-1}\log r \tag{2.8} \label{eq:208}
    \end{equation*}
    uniformly for $z \in \Omega'$. 
\end{thm}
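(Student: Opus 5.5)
We apply a Jensen-type formula on a half-annulus to the function $g := f_{V_z,m}$, which is holomorphic near $\{0\le \arg\lambda\le\pi,\ |\lambda|>c_0\}$. Its zeros in $\Lambda_0$ correspond, with finitely many exceptions, to resonances $e^{i\pi m}\lambda\in\Lambda_m$. Since $c_0$ depends only on $\norm{V_z}_\infty$, and $z\mapsto\norm{V_z}_\infty$ is bounded on $\Omega'\subset\subset\Omega$ by continuity of the analytic family, we may fix a single $r_0>c_0$ valid for all $z\in\Omega'$.

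\textbf{Step 1: the half-annulus Jensen formula.} Let $D_{r}=\{r_0<|\lambda|<r,\ 0<\arg\lambda<\pi\}$. Apply Green's formula to $\log|g|$ against the kernel $\log(|\lambda|/r_0)$ on $D_r$, or equivalently integrate the argument principle $n(t)=\frac{1}{2\pi i}\oint_{\partial D_t} g'/g$ against $dt/t$. Either route gives an identity of the form
\begin{equation*}
\int_{r_0}^r \frac{n_g(t)}{t}\,dt = \frac{1}{2\pi}\int_0^\pi \log|g(re^{i\theta})|\,d\theta - \frac{1}{2\pi}\int_0^\pi \log|g(r_0e^{i\theta})|\,d\theta + \frac{1}{2\pi}\int_{r_0}^r \Big(\operatorname{Im}\frac{g'}{g}(-t)\Big)\ldots\,dt + \ldots,
\end{equation*}
where the omitted terms are boundary integrals along the real segments $[r_0,r]$ and $[-r,-r_0]$. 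Concretely, for $\lambda=te^{i\theta}$, Cauchy–Riemann gives $\partial_\theta\log|g| = -t\,\operatorname{Im}(g'/g)$. So the real-axis contributions have the form $\int_{r_0}^r \log(t/r_0)\,\operatorname{Im}\frac{d}{ds}\log g(s)\big|_{s=\pm t}\,dt$, up to sign.

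\textbf{Step 2: bounding the error terms uniformly in $z$.} By (2.3), the real-axis terms are bounded by $C\int_{r_0}^r t^{d-2}\log t\,dt = O(r^{d-1}\log r)$. This is a log too weak for (2.7), so the step needs more care. I would instead write the real-axis contribution as $\int_{r_0}^r \frac{1}{t}\int_{r_0}^t |g'/g(\pm s)|\,ds\,dt$ coming from the argument principle. The inner integral is $O(t^{d-1})$, and dividing by $t$ and integrating gives $O(r^{d-1})$, as needed; the argument principle ordering avoids the log loss. The $r_0$-circle term is a constant, but it must be uniform in $z\in\Omega'$. This needs:
\begin{enumerate}
\item a uniform upper bound on $|g|$ on $|\lambda|=r_0$, which follows from continuity of $(z,\lambda)\mapsto f_{V_z,m}(\lambda)$ there;
\item a uniform lower bound on $\int\log|g(r_0e^{i\theta})|$, which is delicate since zeros could approach the circle.
\end{enumerate}
To handle the lower bound, I would pick $r_0$ (possibly $z$-dependent, within a fixed interval $[r_1,2r_1]$) avoiding zeros, and use subharmonicity in $z$ together with the upper bound. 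Alternatively, I would use the large-$|\lambda|$ behavior on the real axis: (2.3) integrated from a point where $f$ is near $1$ gives a two-sided bound for $\log|g|$ on the real axis for $|s|>C$. Jensen on a small disk then controls the number of zeros, giving a uniform constant. The difference between $\int_{r_0}^r n_g(t)\,dt/t$ and $N_{V_z,m}(r)$ is $O(\log r)$, coming from finitely many exceptional zeros or poles and the region $|\lambda|<r_0$. This requires the number of such exceptions to be uniformly bounded on $\Omega'$, which I expect follows from the construction in \cite{6} with constants depending only on $\norm{V}_\infty$.

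\textbf{Step 3: deducing (2.8).} Insert Proposition \ref{prop:2.1}, whose constants depend only on $\norm{V_z}_\infty$ and so are uniform on $\Omega'$, into (2.7). Integrating gives $\frac{1}{2\pi}\int_0^\pi h_d(\theta)\,d\theta\,a^dr^d = \frac{c_d a^d}{d}r^d$ by (2.4), and the remainder is $A r^{d-1}\log r$.

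The main obstacle is the uniform-in-$z$ control of the lower-order terms, namely the inner-circle term and the finitely many exceptional points, together with avoiding the extra $\log r$ in the real-axis terms.
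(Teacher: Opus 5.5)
Your skeleton matches the paper's. Integrating the argument principle for $f_{V_z,m}$ on the half-annulus against $dt/t$ is exactly \eqref{eq:211}, taken from \cite[(18)]{6}. After Fubini the real-axis terms have kernel $\log(r/s)$, so \eqref{eq:203} bounds them by $O(r^{d-1})$. Finally, \eqref{eq:208} follows from Proposition~\ref{prop:2.1} and \eqref{eq:204}, as in your Step 3.

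Two points in your Step 1 need correcting. First, the Green's-formula identity you display is not correct as written. With kernel $\log(|\lambda|/r_0)$ the left side is $\sum_j \log(|\lambda_j|/r_0)$, and the outer arc carries a radial-derivative term, so the ``$\log r$ loss'' is an artifact. Second, the argument-principle route also produces the inner term $r_0\log(r/r_0)\int_0^\pi \partial_R \log|f_{V_z,m}(Re^{i\theta})|\,d\theta\,\big|_{R=r_0}$. You omit it, and its coefficient must also be bounded uniformly in $z$.

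The genuine gap is uniformity in $z$, which is the real content of the theorem beyond the single-potential case. A single $r_0$ cannot serve all $z\in\Omega'$. Some $f_{V_z,m}$ may vanish on $|\lambda|=r_0$, and zeros cross that circle as $z$ varies, so neither the identity nor the inner derivative term is controlled.

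Neither remedy you sketch closes this:
\begin{enumerate}
\item Plurisubharmonicity of $z\mapsto\int_0^\pi\log|f_{V_z,m}(r_0e^{i\theta})|\,d\theta$ gives only local upper bounds, never the lower bound you need.
\item No point is known where $f_{V_z,m}$ is close to $1$, or even bounded below, uniformly in $z$. Estimate \eqref{eq:203} controls only $(\log f)'$ on the real axis, not $\log|f|$ itself. So your ``Jensen on a small disk'' count has no anchor.
\end{enumerate}
The uniform bound on $n_{V_z,m}(r_0)$ is also left as an expectation.

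The missing idea is to localize in $z$ and then use compactness:
\begin{enumerate}
\item Take $\Omega'\subset\subset\Omega''\subset\subset\Omega$, so that \eqref{eq:203} and Proposition~\ref{prop:2.1} hold with uniform constants on $\Omega''$.
\item For each $z_0\in\overline{\Omega'}$, choose $R_{z_0}>C$ with $f_{V_{z_0},m}\neq 0$ on $|\lambda|=R_{z_0}$.
\item By continuity of zeros (Hurwitz), this persists for $z$ in a neighborhood $\Omega_{z_0}$. Hence $\log|f_{V_z,m}(Re^{i\theta})|$ and its $R$-derivative are continuous, and so bounded, for $z\in\Omega_{z_0}$, $\theta\in[0,\pi]$ and $R$ near $R_{z_0}$. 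Likewise $n_{V_z,m}(R_{z_0})$ is bounded there.
\item This gives \eqref{eq:207} on each $\Omega_{z_0}$ with a $z_0$-dependent constant. A finite subcover of the compact set $\overline{\Omega'}$ makes the constant uniform.
\end{enumerate}
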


\begin{proof}
    Let $\Omega''$ be open, connected, and $\Omega' \subset \subset \Omega'' \subset \subset \Omega$. Then $\norm{V_z}_\infty$ is uniformly bounded for $z \in \Omega''$, and by \eqref{eq:203} and Proposition \ref{prop:2.1} we can find constants $A, C > 0$ such that 
    \begin{align*}
        \abs{\frac{d}{ds} \log f_{V_z, m}(s)} \leq&\; C \abs{s}^{d-2}, \quad \abs{s} > C,\; \arg s = 0 \text{ or } \pi\tag{2.9} \label{eq:209}\\
        \log \abs{f_{V_z, m}(re^{i \theta})} \leq&\; h_d(\theta)a^dr^d + Ar^{d-1}\log r, \quad r > C, \theta \in [0, \pi] \tag{2.10} \label{eq:210}
    \end{align*}
    uniformly for $z \in \Omega''$. 

    Let $z_0 \in \overline{\Omega'}$. Then $\exists R_{z_0} > C$ such that $f_{V_{z_0}, m}(\cdot)$ has no zeros on $\abs{\lambda} = R_{z_0}$. By continuity of zeros of analytic functions, there is an open set $\Omega_{z_0} \subset \Omega''$ so that $f_{V_z, m}(R_{z_0}e^{i \theta}) \neq 0$ for all $z \in \Omega_{z_0}$, $\theta \in [0, \pi]$. Thus, $\log \abs{f_{V_z, m}(Re^{i\theta}) }$ and its derivatives are continuous for such $z, \theta$, and $R$ near $R_{z_0}$. We therefore apply the formula \cite[(18)]{6} to $f_{V_z, m}$ with $z \in \Omega_{z_0}$ to find 
    \begin{align*}
       & \int_{R_{z_0}}^r \frac{n_{V_z, m}(t)}{t}dt = \frac{1}{2\pi} \operatorname{Im}\left(\int_{R_{z_0}}^r t^{-1} \int_{-t}^{-R_{z_0}} \frac{\frac{d}{ds} f_{V_z, m}(s)}{f_{V_z, m}(s)}dsdt  + \int_{R_{z_{0}}}^r t^{-1} \int_{R_{z_0}}^t \frac{\frac{d}{ds}f_{V_z, m}(s)}{f_{V_z, m}(s)}dsdt \right)\\ 
       & +\frac{1}{2\pi} \int_0^\pi \log \abs{f_{V_z, m}(re^{i \theta})}d\theta - \frac{1}{2\pi} \int_0^\pi \log \abs{f_{V_z, m}(R_{z_0}e^{i\theta})} d\theta \\
       &- \frac{1}{2\pi} R_{z_0} \log\left( \frac{r}{R_{z_0}} \right) \int_0^\pi \frac{d}{dR} \log \abs{f_{V_z, m}(Re^{i\theta})}\bigg\rvert_{R = R_{z_0}} d\theta\tag{2.11} \label{eq:211}
    \end{align*}

    It follows from \eqref{eq:209} and \eqref{eq:211} that we can find a constant $A_{z_0, m} > 0$ such that
    \begin{equation*}
        \abs{\int_{R_{z_0}}^r \frac{n_{V_z, m}(t)}{t}dt - \frac{1}{2\pi} \int_0^\pi \log \abs{f_{V_z, m}(re^{i\theta})}d\theta} \leq A_{z_0, m}r^{d-1}, \quad z \in \Omega_{z_0}, r > A_{z_0, m}. 
    \end{equation*}
    Again by continuity of zeros for analytic functions, we see that $n_{V_z, m}(R_{z_0})$ and hence $\int_0^{R_{z_0}} t^{-1}n_{V_z, m}(t)dt$ are uniformly bounded for $z \in \Omega_{z_0}$. Thus, for a constant $A_{z_0, m}' > A_{z_0, m}$, 
    \begin{equation*}
        \abs{N_{V_z, m}(r) - \frac{1}{2\pi} \int_0^\pi \log \abs{f_{V_z, m}(re^{i\theta})} d\theta} \leq A_{z_0, m}'r^{d-1}, \quad z \in \Omega_{z_0}, r > A_{z_0, m}'.
    \end{equation*}
    By construction, $\overline{\Omega'} \subset \bigcup_{z_0 \in \overline{\Omega'}} \Omega_{z_0}$ so $\set{\Omega_{z_0}}_{z_0 \in \overline{\Omega'}}$ forms a cover of $\overline{\Omega'}$. Thus since $\overline{\Omega'}$ is compact, we have $\overline{\Omega'} \subset \bigcup_{i=1}^N \Omega_{z_i}$ for a \emph{finite} union. The claim \eqref{eq:207} clearly follows from this. 

    The claim \eqref{eq:208} follows from \eqref{eq:207} and \eqref{eq:210} once we recall \eqref{eq:204}.
\end{proof}

Clearly, Theorem \ref{thm:1.3} follows from \eqref{eq:208} by taking a family $V_z \equiv V$. 

\section{MAXIMAL RESONANCE DENSITY FOR A RADIAL STEP POTENTIAL IN EVEN DIMENSIONS}\label{sec:3}

\subsection{Preliminaries and uniform expansions of Bessel functions}\label{sec:3.1}
We prove Theorem \ref{thm:1.4} in Subsection \ref{sec:3.2}, and we lay the groundwork here by recalling some material from \cite{6}, as well as the uniform asymptotic expansions of Bessel functions obtained by Olver, \cite{13}. 

Without loss, we will normalize so that $a =1$. Here we define $V(x) := V_0 \chi_{[0, 1]}(\abs{x}), V_0 > 0$, as in Theorem \ref{thm:1.4}. We also set $\Sigma(\lambda) := (\lambda^2 - V_0)^{\frac{1}{2}}$ with the square root having branch cuts $\left(-\infty, -V_0^{\frac{1}{2}}\right] \cup \left[V_0^{\frac{1}{2}}, \infty \right)$. For $\ell = 0, 1, 2, \ldots$ and $\nu = \ell + (d-2)/2$, it is shown in \cite[Section 5]{6} that $\lambda_0 \in \Lambda_0$ is a zero of the function 
\begin{equation*}
    F_m^{(\nu)}(\lambda) := \Sigma(\lambda)J_\nu'\left( \Sigma(\lambda) \right) H_\nu^{(1)}\left( e^{im \pi} \lambda \right) - e^{im \pi} \lambda J_\nu \left( \Sigma(\lambda)\right) H_\nu^{(1)'}\left(e^{im \pi} \lambda \right), \quad \lambda \in \Lambda_0, \tag{3.1} \label{eq:301}
\end{equation*}
where $J_\nu, H_\nu^{(1)}$ are the Bessel, Hankel functions respectively (see \cite{13}, \cite{12}), if and only if $\lambda_1 = e^{im \pi} \lambda_0 \in \Lambda_m$ is a resonance. Moreover, the multiplicity of the resonance $\lambda_1$ is equal to $M(\ell)$, the dimension of the space of spherical harmonics on $\mathbb S^{d-1}$ with eigenvalue $\ell(\ell + d - 2)$. 

To study the zeros of the functions in \eqref{eq:301}, we follow \cite{6}, putting $z = \lambda/\nu, \tilde{z} = \tilde{z}(z) := (z^2 - V_0/\nu^2)^{\frac{1}{2}}$ and using \cite[(55)-(57)]{6} to write 
\begin{equation*}
    F_m^{(\nu)}(\nu z) = (-1)^{m\nu}\left[ F_0^{(\nu)}(\nu z) - 2mG_0^{(\nu)} (\nu z)\right], \quad z \in \Lambda_0 \tag{3.2} \label{eq:302}
\end{equation*}
where 
\begin{equation*}
    F_0^{(\nu)}(\nu z) = \nu \tilde{z} J_\nu'(\nu \tilde{z})H_\nu^{(1)}(\nu z) - \nu zJ_\nu(\nu \tilde{z})H_\nu^{(1)'}(\nu z) \tag{3.3} \label{eq:303}
\end{equation*}
and 
\begin{equation*}
    G_0^{(\nu)}(\nu z) = \nu \tilde{z}J_\nu'(\nu \tilde{z})J_\nu(\nu z) - \nu z J_\nu(\nu \tilde{z})J_\nu'(\nu z). \tag{3.4} \label{eq:304}
\end{equation*}

Next we record the uniform asymptotic expansions of Bessel and Hankel functions from \cite{13}. To state the expansions, we first recall the definition of $\rho(z)$ and define the function $\zeta(z)$ through 
\begin{equation*}
    \rho(z) = \frac{2}{3}\zeta(z)^{3/2} = \log \frac{1 + \sqrt{1 - z^2}}{z} - \sqrt{1-z^2}, \quad z \in \overline{\C}_+ \backslash\set{0}. \tag{3.5} \label{eq:305}
\end{equation*}
Notice that 
\begin{equation*}
    \rho'(z) = \zeta(z)^{1/2}\zeta'(z) = - \frac{\sqrt{1 - z^2}}{z}. \tag{3.6} \label{eq:306}
\end{equation*}
For further properties of these functions we refer to \cite{13}, \cite{12}. We mention here that $\abs{z} \to \infty$ through $\C_+$ if and only if $\abs{\zeta} \to \infty$ through $\set{-\pi < \arg \zeta < - \frac{\pi}{3}}$ and in that case
\begin{equation*}
    z = \frac{2}{3}(-\zeta)^{2/3} + \frac{\pi}{2} + O(\abs{\zeta}^{-1}), \tag{3.7} \label{eq:307}
\end{equation*}
(see \cite[(4.14)]{13}).

We also let $Ai(z)$ denote the Airy function, and set $Ai_-(z) := Ai(e^{2\pi i/3}z)$. Note that $Ai''(z) = zAi(z)$. Finally, define the functions 
\begin{equation*}
    \phi(\zeta) := \left( \frac{4\zeta}{1 - z^2} \right)^{1/4},  \quad \psi(\zeta) = \frac{2}{z\phi(\zeta)}. 
\end{equation*}
Then Olver, \cite{13}, proved that for any $\delta > 0$, the following expansions hold as $\nu \to \infty$ uniformly in $0 \leq \arg z \leq \pi - \delta$: 
\begin{align*}
    J_\nu(\nu z) \sim &\;\phi(\zeta) \left(\frac{Ai(\nu^{2/3}\zeta)}{\nu^{1/3}} \sum_{j=0}^\infty\frac{A_j(\zeta)}{\nu^{2j}} + \frac{Ai'(\nu^{2/3}\zeta)}{\nu^{5/3}} \sum_{j=0}^\infty \frac{B_j(\zeta)}{\nu^{2j}} \right), \tag{3.8} \label{eq:308}\\
    J_\nu'(\nu z) \sim &\; -\psi(\zeta) \left(\frac{Ai(\nu^{2/3}\zeta)}{\nu^{4/3}} \sum_{j=0}^\infty\frac{C_j(\zeta)}{\nu^{2j}} + \frac{Ai'(\nu^{2/3}\zeta)}{\nu^{2/3}} \sum_{j=0}^\infty \frac{D_j(\zeta)}{\nu^{2j}} \right), \tag{3.9} \label{eq:309}\\
    H_\nu^{(1)}(\nu z) \sim &\; 2e^{-i\pi/3}\phi(\zeta) \left(\frac{Ai_-(\nu^{2/3}\zeta)}{\nu^{1/3}} \sum_{j=0}^\infty\frac{A_j(\zeta)}{\nu^{2j}} + \frac{Ai_-'(\nu^{2/3}\zeta)}{\nu^{5/3}} \sum_{j=0}^\infty \frac{B_j(\zeta)}{\nu^{2j}} \right), \tag{3.10} \label{eq:310}\\
    H_\nu^{(1)'}(\nu z) \sim &\; -2e^{-i\pi/3}\psi(\zeta) \left(\frac{Ai_-(\nu^{2/3}\zeta)}{\nu^{4/3}} \sum_{j=0}^\infty\frac{C_j(\zeta)}{\nu^{2j}} + \frac{Ai_-'(\nu^{2/3}\zeta)}{\nu^{2/3}} \sum_{j=0}^\infty \frac{D_j(\zeta)}{\nu^{2j}} \right), \tag{3.11} \label{eq:311}\\
\end{align*}

In the above expansions, $A_0(\zeta) \equiv 1$ and all other coefficients are determined recursively, see \cite[(4.17), (6.10), (6.11)]{13}. All coefficients are holomorphic (as functions of $z$) in $0 \leq \arg z \leq \pi-\delta$; see \cite{13} for more information. Here only the following additional properties of these coefficients will be needed. They follow directly from \cite[(6.6), (6.12)]{13} and \eqref{eq:307}. 

\begin{prop}\label{prop:3.1}
    The following bounds hold uniformly in $0 \leq \arg z \leq \pi - \delta$:
    \begin{enumerate}[a)]
        \item $A_0(\zeta) \equiv 1, A_1(\zeta) = O(\abs{\zeta}^{-3}), A_2(\zeta) = O(\abs{\zeta}^{-6})$,
        
        \item $B_0(\zeta) = O(\abs{\zeta}^{-2}), B_1(\zeta) = O(\abs{\zeta}^{-5})$,

        \item $C_0(\zeta) = O(\abs{\zeta}^{-1}), C_1(\zeta) = O(\abs{\zeta}^{-4})$,

        \item $D_0(\zeta) \equiv 1, D_1(\zeta) = O(\abs{\zeta}^{-3}), D_2(\zeta) = O(\abs{\zeta}^{-6})$.
    \end{enumerate}
    Moreover,
    \begin{enumerate}[a')]
        \item $\frac{d}{dz}A_1(\zeta) = O(\abs{\zeta}^{-9/2})$

        \item $\frac{d}{dz}B_0(\zeta) = O(\abs{\zeta}^{-7/2})$

        \item $\frac{d}{dz}C_0(\zeta) = O(\abs{\zeta}^{-5/2})$

        \item $\frac{d}{dz}D_1(\zeta) = O(\abs{\zeta}^{-9/2})$.
    \end{enumerate}
\end{prop}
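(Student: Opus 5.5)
This follows from Olver's explicit formulas for the coefficients. Olver writes them in terms of the polynomials $u_k(t)$, $v_k(t)$ of Debye's expansion, evaluated at $t = (1-z^2)^{-1/2}$, together with powers of $\zeta^{-3/2}$. Concretely, \cite[(6.6), (6.12)]{13} give $A_0\equiv1$, $D_0\equiv1$ and
\begin{equation*}
A_j(\zeta)=\sum_{s=0}^{2j}\left(\tfrac32\right)^s v_s\,\zeta^{-3s/2}\,u_{2j-s}\big((1-z^2)^{-1/2}\big),\qquad
B_j(\zeta)=-\zeta^{-1/2}\sum_{s=0}^{2j+1}\left(\tfrac32\right)^s\lambda_s\,\zeta^{-3s/2}\,u_{2j-s+1}\big((1-z^2)^{-1/2}\big),
\end{equation*}
with analogous formulas for $C_j, D_j$ (with $u_k$ replaced by $v_k$, and $\lambda_s\leftrightarrow\mu_s$). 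Here $\lambda_s,\mu_s$ are absolute constants.

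The plan is to estimate each term in these sums. The region $0\le\arg z\le\pi-\delta$ splits into two parts.

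On a compact set $|z|\le R$ the coefficients are holomorphic in $z$, including near the turning point $z=1$ where the individual terms are singular but the sums are not. On such a set $|\zeta|$ is bounded, so every claimed bound $O(|\zeta|^{-k})$ is just boundedness. For the derivatives, $d/dz$ of a holomorphic function is also bounded on a slightly smaller compact set. This part is therefore trivial, provided one notes that $|\zeta|$ is bounded away from $\infty$ there.

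The real content is $|z|\to\infty$, using \eqref{eq:307}: $|\zeta|\asymp|z|^{2/3}$, i.e.\ $|z|\asymp|\zeta|^{3/2}$. The argument $t=(1-z^2)^{-1/2}$ satisfies $|t|\asymp|z|^{-1}\asymp|\zeta|^{-3/2}$. The Debye polynomial $u_k(t)$ is divisible by $t^k$ (since $u_k$ has degree $3k$ and lowest-order term $t^k$), so $u_k(t)=O(|\zeta|^{-3k/2})$. Consequently every term of $A_j$ is $O(|\zeta|^{-3s/2}|\zeta|^{-3(2j-s)/2})=O(|\zeta|^{-3j})$. This gives $A_1=O(|\zeta|^{-3})$ and $A_2=O(|\zeta|^{-6})$. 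Similarly, $B_j=O(|\zeta|^{-1/2-3(2j+1)/2})=O(|\zeta|^{-3j-2})$, which gives the bounds on $B_0,B_1$. The same count with $v_k$ gives $D_j=O(|\zeta|^{-3j})$. For $C_j$ the prefactor is $\zeta^{1/2}$ rather than $\zeta^{-1/2}$, so $C_j=O(|\zeta|^{1/2-3(2j+1)/2})=O(|\zeta|^{-3j-1})$, which gives the bounds on $C_0, C_1$.

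For the derivative bounds (a')--(d'), I would differentiate term by term, combining the chain rule with \eqref{eq:306}: $\zeta'(z)=-\sqrt{1-z^2}/(z\zeta^{1/2})$, so $|\zeta'|\asymp|\zeta|^{-1/2}$ at infinity. Also $dt/dz=z(1-z^2)^{-3/2}=O(|z|^{-2})=O(|\zeta|^{-3})$. Hence differentiating a factor $\zeta^{-\alpha}$ costs an extra factor $|\zeta|^{-1}\cdot|\zeta|^{-1/2}=|\zeta|^{-3/2}$. Differentiating $u_k(t)$ gives $u_k'(t)\,dt/dz=O(|t|^{k-1}|\zeta|^{-3})=O(|\zeta|^{-3k/2-3/2})$, which is again a loss of exactly $|\zeta|^{-3/2}$. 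Thus each $O(|\zeta|^{-k})$ bound improves to $O(|\zeta|^{-k-3/2})$ after applying $d/dz$. This yields $-3-\tfrac32=-\tfrac92$ for $A_1$ and $D_1$, $-2-\tfrac32=-\tfrac72$ for $B_0$, and $-1-\tfrac32=-\tfrac52$ for $C_0$, exactly as claimed.

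The main obstacle is justifying uniformity in the intermediate region, near the turning point $z=1$ and along the boundary rays, where Olver's term-by-term formula has cancelling singularities. I would handle this by compactness: choose $R$ large, so that the asymptotic regime $|z|\ge R$ uses the explicit formula, which is valid and free of singularities there since $|1-z^2|$ is large. The compact remainder, intersected with the closed sector, then uses holomorphy of the coefficients as asserted by Olver.
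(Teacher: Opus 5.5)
Your route is the same as the paper's, whose proof is just a pointer to Olver's explicit formulas \cite[(6.6), (6.12)]{13} together with \eqref{eq:307}. Your power counting for $|z|\to\infty$ is correct and is exactly what that pointer leaves implicit: the Debye polynomials are divisible by $t^k$, $|t|\asymp|z|^{-1}\asymp|\zeta|^{-3/2}$, and each $z$-derivative gains exactly $|\zeta|^{-3/2}$ via \eqref{eq:306} and $dt/dz=O(|z|^{-2})$.

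The genuine gap is in the step you call trivial. It is false that $|\zeta|$ is bounded on $\{|z|\le R,\ 0\le\arg z\le\pi-\delta\}$, because this set reaches the origin. There $\frac23\zeta^{3/2}=\rho(z)=\log(2/z)-1+o(1)$, so $|\zeta|\to\infty$ (with $\arg\zeta\to0$), while $t=(1-z^2)^{-1/2}\to1$. Feeding this into the very formulas you wrote down gives:
\begin{itemize}
\item $A_1(\zeta)\to u_2(1)=\frac{1}{288}\neq0$;
\item $|B_0(\zeta)|\sim\frac{1}{12}|\zeta|^{-1/2}$;
\item $|C_0(\zeta)|\sim\frac{1}{12}|\zeta|^{1/2}$, which is not even bounded.
\end{itemize}
Here $u_1(1)=v_1(1)=-\frac{1}{12}$ and $u_2(1)=\frac{1}{288}$. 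So the claimed bounds $O(|\zeta|^{-3})$, $O(|\zeta|^{-2})$, $O(|\zeta|^{-1})$ are false near $z=0$, and no argument can establish them on the whole sector. Your ``compact remainder'' is not a compact subset of the region where $\zeta$ is finite and the coefficients are holomorphic, so the compactness step cannot close.

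The repair is to state and prove the proposition on $\{0\le\arg z\le\pi-\delta,\ |z|\ge r_0\}$ for some fixed $r_0>0$. This is all the paper ever uses, since the bounds are only applied on $U_\nu^\delta$. For $\nu>1/c$ (with $c$ the constant in the definition of $U_\nu^\delta$) one has $\operatorname{Re}\rho<0$ on $U_\nu^\delta$. On the other hand, $\operatorname{Re}\rho(z)=\log(2/|z|)-1+o(1)\to+\infty$ as $z\to0$. Hence $U_\nu^\delta\subset\{|z|\ge r_0\}$ for some $r_0$ independent of $\nu$.

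On $\{r_0\le|z|\le R\}$ within the closed sector, $\zeta$ is continuous and bounded, and the coefficients are holomorphic, including at the turning point $z=1$. So your compactness argument does apply there, and your large-$|z|$ computation covers $|z|\ge R$.
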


The Airy function also has asymptotic expansions. If $\xi = \frac{2}{3}w^{3/2}$, then 
\begin{align*}
    Ai(w) \sim&\; \frac{e^{-\xi}}{2\pi^{1/2}w^{1/4}}\left(1 + \sum_{s=1}^\infty \frac{u_s}{(-\xi)^s} \right), \tag{3.12} \label{eq:312}\\
    Ai'(w) \sim&\; \frac{w^{1/4}e^{-\xi}}{2\pi^{1/2}}\left(-1 + \sum_{s=1}^\infty \frac{u_s'}{(-\xi)^s} \right), \tag{3.13} \label{eq:313}
\end{align*}
as $\abs{w} \to \infty$ in $\abs{\arg w} \leq \pi - \delta$ for any $\delta > 0$. Similarly, 
\begin{align*}
    Ai(-w) \sim&\; \frac{1}{\pi^{1/2} w^{1/4}} \left[\cos \left( \xi- \frac{\pi}{4} \right)\left(1 + \sum_{s=1}^\infty \frac{a_s}{\xi^{2s}}\right) + \sin \left( \xi - \frac{\pi}{4} \right)\sum_{s=0}^\infty \frac{b_s}{\xi^{2s+1}}  \right], \tag{3.14} \label{eq:314}\\
    Ai'(-w) \sim&\; \frac{w^{1/4}}{\pi^{1/2}} \left[\sin \left( \xi- \frac{\pi}{4} \right)\left(1 + \sum_{s=1}^\infty \frac{a_s'}{\xi^{2s}}\right) + \cos \left( \xi - \frac{\pi}{4} \right)\sum_{s=0}^\infty \frac{b_s'}{\xi^{2s+1}}  \right], \tag{3.15} \label{eq:315}\\
\end{align*}
as $\abs{w} \to \infty$ in $\abs{\arg w} \leq \frac{2\pi}{3} - \delta$ for any $\delta > 0$. Here $u_s, u_s', a_s, b_s, a_s', b_s'$ are numerical constants that are possible to determine explicitly (see \cite[Appendix]{13}). Since all of the above expansions are differentiable, we remark that it is not difficult to check that 
\begin{equation}
    b_0 + b_0' = \frac{1}{6} \quad \text{and} \quad u_1 + u_1' = \frac{1}{6}. \tag{3.16} \label{eq:316}
\end{equation}

Finally, since \eqref{eq:303} and \eqref{eq:304} involve $\tilde{z}(z) = (z^2 - V_0/\nu^2)^{1/2}$ we will need to compare functions of $\tilde{z}$ with functions of $z$. To do this, we first fix small constants $c, \delta > 0$ and define the set 
\begin{equation*}
    U_v^\delta := \set{z \in \C_+ : \arg z \in (0, \pi -\delta), \operatorname{Re}\rho(z) < \frac{-\log (c\nu)}{2\nu}}. 
\end{equation*}
Notice that in this region $\abs{\rho} \gtrsim \nu^{-1}\log \nu$ and $\nu^2 \abs{1 - z^2} \gtrsim \nu^{4/3} (\log \nu)^{4/3}$. Using Taylor's formula, \eqref{eq:305}, \eqref{eq:306}, \eqref{eq:307}, we obtain 
\begin{align*}
    \left(\frac{1-\tilde{z}^2}{1-z^2} \right)^{\pm \frac{1}{4}} =&\; 1 \pm \frac{V_0}{4\nu^2(1-z^2)}(1 + o(1)), \tag{3.17} \label{eq:317}\\
    \rho(\tilde{z}) =&\; \rho(z) + O\left(\frac{1}{\nu^2} \right), \tag{3.18}\label{eq:318}\\
    \zeta(\tilde{z})^{\pm \frac{1}{4}} =&\; \zeta(z)^{\pm \frac{1}{4}}\mp \frac{V_0}{8\nu^2z} \cdot \frac{\zeta'(z)}{\zeta(z)^{1\mp \frac{1}{4}}} + \frac{1}{\nu^2(1-z^2)}o(1), \tag{3.19} \label{eq:319}\\
    Ai(\nu^{2/3} \zeta(\tilde{z})) =&\;Ai(\nu^{2/3}\zeta) - \frac{V_0}{2\nu^{4/3}z}\cdot \zeta'Ai'(\nu^{2/3}\zeta) + \frac{V_0^2}{4\nu^2z^2}(\zeta')^2 \zeta Ai(\nu^{2/3}\zeta) + \frac{1}{\nu^2(1-z^2)}e^{-\nu\rho}o(1) \tag{3.20} \label{eq:320}\\
    Ai'(\nu^{2/3}\zeta(\tilde{z})) =&\; Ai'(\nu^{2/3}\zeta)  - \frac{V_0}{2\nu^{2/3}z}\zeta' \zeta Ai(\nu^{2/3}\zeta) + \frac{V_0^2}{4\nu^2z^2}(\zeta')^2\zeta Ai'(\nu^{2/3}\zeta) \\
    +&\; \frac{1}{\nu^2(1-z^2)}e^{-\nu \rho}o(1) \tag{3.21} \label{eq:321}
\end{align*}
as $\nu \to \infty$, uniformly for $z \in U_\nu^\delta$. Similarly, using Proposition \ref{prop:3.1}, 
\begin{align*}
    A_1(\zeta(\tilde z)) =&\; A_1(\zeta) + \frac{1}{\nu^2(1-z^2)}O(1), \tag{3.22} \label{eq:322}\\
    B_0(\zeta(\tilde z)) =&\; B_0(\zeta) + \frac{1}{\nu^2(1-z^2)}O(1), \tag{3.23} \label{eq:323}\\
    C_0(\zeta(\tilde z)) =&\; C_0(\zeta) + \frac{1}{\nu^2(1-z^2)}O(1), \tag{3.24} \label{eq:324}\\
    D_1(\zeta(\tilde z)) =&\; D_1(\zeta) + \frac{1}{\nu^2(1-z^2)}O(1), \tag{3.25} \label{eq:325}
\end{align*}
as $\nu \to \infty$, uniformly for $z \in U_\nu^\delta$. 

\subsection{Proof of Theorem \ref{thm:1.4}}\label{sec:3.2}
We shall prove the following proposition on the asymptotic behavior of the functions $F_0^{(\nu)}, G_0^{(\nu)}$ defined in \eqref{eq:303}, \eqref{eq:304} respectively. 

\begin{prop}\label{prop:3.2}
    The functions $F_0^{(\nu)}, G_0^{(\nu)}$ satisfy 
    \begin{align*}
        F_0^{(\nu)}(\nu z) =&\; - \frac{2i}{\pi}(1 + o(1)) \tag{3.26} \label{eq:326}\\
        G_0^{(\nu)}(\nu z) =&\; O\left( \frac{1}{\nu} \right) - \frac{V_0}{4\pi \nu^2(1 - z^2)}e^{-2\nu\rho}(1 + o(1)) \tag{3.27} \label{eq:327}\\
    \end{align*}
    as $\nu \to \infty$ uniformly for $z \in U_\nu^\delta$. 
\end{prop}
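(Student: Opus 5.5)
The plan is to substitute the uniform expansions \eqref{eq:308}--\eqref{eq:311} into \eqref{eq:303} and \eqref{eq:304}, evaluating every Bessel function of argument $\nu\tilde z$ at $\zeta(\tilde z)$. Then \eqref{eq:317}--\eqref{eq:325} are used to re-expand all quantities at $\zeta=\zeta(z)$, keeping terms through relative order $\nu^{-2}(1-z^2)^{-1}$. In $U_\nu^\delta$ we have $\operatorname{Re}\rho<-\log(c\nu)/(2\nu)$, so $\abs{\nu^{2/3}\zeta}\gtrsim(\log\nu)^{2/3}\to\infty$. Moreover $\arg(\nu^{2/3}\zeta)$ stays in the range where \eqref{eq:312}--\eqref{eq:313} apply to $Ai$ and $Ai_-$ (or \eqref{eq:314}--\eqref{eq:315} near the oscillatory directions). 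Hence every Airy factor may be replaced by its exponential asymptotics: $Ai(\nu^{2/3}\zeta)\approx e^{-\nu\rho}/(2\sqrt\pi\,\nu^{1/6}\zeta^{1/4})$, and $Ai_-(\nu^{2/3}\zeta)$ behaves like $e^{+\nu\rho}$ up to constants. Note that $e^{-\nu\rho}$ is large here, since $\abs{e^{-\nu\rho}}\ge (c\nu)^{1/2}$. This largeness is what forces the careful bookkeeping.

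For \eqref{eq:326} I would compare $F_0^{(\nu)}(\nu z)$ with $\nu z[J_\nu'(\nu z)H^{(1)}_\nu(\nu z)-J_\nu(\nu z)H^{(1)\prime}_\nu(\nu z)]$, which equals $\nu z\cdot(-2i/(\pi\nu z))=-2i/\pi$ by the Wronskian $J_\nu H_\nu^{(1)\prime}-J_\nu' H_\nu^{(1)}=2i/(\pi x)$. The difference comes from replacing $\tilde z$ by $z$ in the $J$-factors. By \eqref{eq:317}--\eqref{eq:321}, the relative error is $O(\nu^{-2}\abs{1-z^2}^{-1})$ in the prefactors plus a $\zeta'Ai'$ correction. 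In the product $J\cdot H$, the growths $e^{-\nu\rho}$ and $e^{\nu\rho}$ cancel, so these errors are $o(1)$: using $\nu^2\abs{1-z^2}\gtrsim\nu^{4/3}(\log\nu)^{4/3}$, the worst term $\nu^{-2/3}\abs{\zeta'}\abs{\zeta}^{1/2}$ is $O(\nu^{-1}\abs{z}^{-1}\abs{1-z^2}^{1/2}\cdot\nu^{1/3}\ldots)$, which tends to $0$ uniformly. That gives $F_0=-\frac{2i}{\pi}(1+o(1))$.

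For \eqref{eq:327}, $G_0^{(\nu)}$ is the analogous Wronskian-type combination with both factors $J$. If $\tilde z=z$ it would vanish identically, so $G_0$ is entirely an effect of the perturbation, of size $e^{-2\nu\rho}$ times a small factor. I would expand $\nu\tilde zJ_\nu'(\nu\tilde z)J_\nu(\nu z)-\nu zJ_\nu(\nu\tilde z)J_\nu'(\nu z)$ to second order in $V_0/\nu^2$. The leading terms cancel through $\phi\psi z=2$ and $A_0=D_0=1$. The first-order corrections from \eqref{eq:319}--\eqref{eq:321} (the $\zeta'Ai'$ and $\zeta'\zeta Ai$ terms) must then be shown to combine into a multiple of $Ai'^2-\zeta Ai^2$. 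Via \eqref{eq:312}--\eqref{eq:313} this becomes $\frac{e^{-2\nu\rho}}{4\pi}(\zeta^{1/2}\nu^{2/3})(\text{subleading})$, where the subleading coefficients are exactly the combinations $u_1+u_1'=\frac16$ (respectively $b_0+b_0'$) from \eqref{eq:316}. The nonvanishing factor $\frac{V_0}{4\pi\nu^2(1-z^2)}$ then emerges after using \eqref{eq:306} to write $\zeta'\zeta^{1/2}=-\sqrt{1-z^2}/z$. The non-exponential pieces, coming from the $A_1,B_0,C_0,D_1$ corrections (\eqref{eq:322}--\eqref{eq:325}, Proposition~\ref{prop:3.1}) and the $o(1)$ remainders, should be absorbed into $O(1/\nu)$ or into the $(1+o(1))$ factor multiplying the main term.

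The main obstacle is this second-order cancellation in $G_0$. Several terms are formally of size $\nu^{-2/3}e^{-2\nu\rho}$ or $\nu^{-4/3}e^{-2\nu\rho}$, larger than the claimed main term, and they must cancel exactly. I would organize the computation by writing $G_0=\nu\,\mathcal W(\tilde z,z)$ as a bilinear form in $(Ai,Ai')$ at $\nu^{2/3}\zeta$, and collecting coefficients of $Ai^2$, $AiAi'$, and $Ai'^2$ separately. Each coefficient is checked up to the needed order, invoking the Airy equation $Ai''=wAi$ to reduce derivatives. Uniformity near the edge where $\abs{\nu\rho}\sim\log\nu$ is handled by the definition of $U_\nu^\delta$.
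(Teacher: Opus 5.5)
Your argument for \eqref{eq:326} is sound: you use the exact Wronskian $J_\nu H_\nu^{(1)\prime}-J_\nu' H_\nu^{(1)}=2i/(\pi x)$ plus a relative-error bound for replacing $\tilde z$ by $z$. The relative correction from \eqref{eq:320} is $O(\nu^{-1}\abs{\zeta'\zeta^{1/2}}/\abs{z})\to 0$ uniformly. This is a mild variant of the paper's direct substitution of \eqref{eq:329}--\eqref{eq:332} together with \eqref{eq:317}--\eqref{eq:318}.

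The gap is in \eqref{eq:327}. You attribute the main term to the Airy-argument corrections, combined into $Ai'^2-wAi^2$ with $w=\nu^{2/3}\zeta$ and evaluated through $u_1+u_1'=\frac16$. That step fails. In $U_2$ one gets $\frac{V_0\zeta'}{2\nu^{4/3}z}\left(Ai'^2-wAi^2\right)=\frac{V_0\zeta'}{16\pi\nu^2 z\zeta}e^{-2\nu\rho}(1+o(1))$. By \eqref{eq:305}--\eqref{eq:306}, $\frac{\zeta'}{z\zeta}=-\frac{2\sqrt{1-z^2}}{3z^2\rho}$, which is not a constant multiple of $(1-z^2)^{-1}$: the ratio tends to $-2$ as $z\to1$ and to $-\frac23$ as $\abs{z}\to\infty$. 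So no use of \eqref{eq:306} turns this into $\frac{V_0}{4\pi\nu^2(1-z^2)}$.

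Moreover, you have dropped the cross term $\frac{V_0}{4\nu^2 z}\frac{\zeta'}{\zeta}Ai\,Ai'$ produced by the expansion \eqref{eq:319} of $\zeta(\tilde z)^{\pm1/4}$; you list only the $\zeta'Ai'$ and $\zeta'\zeta Ai$ corrections. Since $Ai\,Ai'=-\frac{1}{4\pi}e^{-2\nu\rho}(1+o(1))$, this cross term is, to leading order, exactly the negative of the previous one. Hence the whole Airy-shift contribution $H$ of \eqref{eq:345} is negligible, which is the content of \eqref{eq:344}. The same cancellation happens in $U_1$ via $b_0+b_0'=\frac16$. So the identities \eqref{eq:316} serve to kill terms, not to create the main one.

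The missing idea is that the main term comes from the algebraic prefactors. Write $J_\nu,J_\nu'$ as in \eqref{eq:333} and use $z\phi\psi=2$, which holds only when $\phi,\psi,z$ are evaluated at matching arguments. Then $G_0^{(\nu)}$ takes the form \eqref{eq:335}. There the products $U(\zeta,\nu)W(\tilde\zeta,\nu)$ and $U(\tilde\zeta,\nu)W(\zeta,\nu)$ carry the reciprocal factors $\left(\frac{1-\tilde z^2}{1-z^2}\right)^{\pm1/4}=1\pm\frac{V_0}{4\nu^2(1-z^2)}(1+o(1))$ from \eqref{eq:317}. Both products equal $-\frac{1}{4\pi}e^{-2\nu\rho}(1+o(1))$ by \eqref{eq:336}. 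So the first-order parts of these factors add rather than cancel, and this is precisely the term $\frac{V_0}{4\pi\nu^2(1-z^2)}e^{-2\nu\rho}$ in \eqref{eq:327}. Only the antisymmetric remainder $U(\zeta,\nu)W(\tilde\zeta,\nu)-U(\tilde\zeta,\nu)W(\zeta,\nu)$ reduces to $H$.

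Your source for the $O(1/\nu)$ term is also misidentified. The $A_1,B_0,C_0,D_1$ corrections and the truncation remainders enter only at size $\frac{1}{\nu^2(1-z^2)}e^{-2\nu\rho}o(1)$. The $O(1/\nu)$ actually arises in the oscillatory sector $U_1$. There, by \eqref{eq:314}--\eqref{eq:315}, the leading parts of $Ai'^2$ and $-\nu^{2/3}\zeta Ai^2$ combine through $\sin^2+\cos^2=1$ into the non-exponential piece $\frac{iV_0\zeta'\zeta^{1/2}}{2\pi\nu z}=O(1/\nu)$.

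A complete bilinear bookkeeping, as you propose, would in principle uncover all of this. As written, however, your plan predicts the wrong origin for both terms in \eqref{eq:327}.
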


Before giving the proof, let us show how this proposition gives Theorem \ref{thm:1.4}. Fix $m \in \Z \backslash\set{0}$. We explained in Subsection \ref{sec:3.1} that $n_{V, m}(r)$ is equal to $\sum_{\ell = 0}^\infty M(\ell)n_{m, \ell}(r)$ where $\nu = \ell + (d-2)/2$ and $n_{m, \ell}(r)$ is the number of zeros of $F_m^{(\nu)}$ in $\set{0 < \arg \lambda < \pi, \abs{\lambda} < r}$. To study the zeros of $F_m^{(\nu)}$, we use \eqref{eq:302} and Proposition \ref{prop:3.2} to write 
\begin{equation*}
    \frac{2\pi(-1)^{m \nu}}{mV_0}F_m^{(\nu)}(\nu z) = \frac{e^{-2\nu \rho}}{\nu^2(1-z^2)}(1 + \varepsilon_{m, \nu}(z)) - \frac{4}{i m V_0}(1 + \varepsilon_{m, \nu}'(z)), \quad z\in U_\nu^\delta, \tag{3.28} \label{eq:328}
\end{equation*}
where $\varepsilon_{m, \nu}(z), \varepsilon_{m, \nu}'(z)$ converge uniformly to zero as $\nu \to \infty$. 

Zeros of equations of the form in \eqref{eq:328} are studied in \cite[Section 4]{7} (the fact that our equation \eqref{eq:328} holds in a slightly smaller region than in \cite[(4.1)]{7} makes no difference as it is easy to check that all of the relevant analysis in \cite[pgs. 195-199]{7} takes place only in a region contained in $U_\nu^\delta$). If we let $n_{m, \ell}^1(r)$ and $n_{m, \ell}^2(r)$ denote the number of zeros of $F_m^{(\nu)}$ in $\set{\operatorname{Re} \lambda \geq 0, \operatorname{Im} \lambda \geq 0, \abs{\lambda} < r}$ and $\set{\operatorname{Re} \lambda \leq 0, \operatorname{Im} \lambda \geq 0, \abs{\lambda} < r}$ respectively, then it is the content of \cite[pgs. 195-199]{7} that 
\begin{equation*}
    \sum_{\ell=0}^\infty M(\ell)n_{m, \ell}^1(r) = \sum_{0 \leq \ell \leq 2r} M(\ell)n_{m, \ell}^1(r) = \frac{c_d}{2}r^d + O(r^{d-\frac{3}{4} + \varepsilon}) \text{ as } r \to \infty, \text{ for any } \varepsilon > 0.
\end{equation*}

Next we use \eqref{eq:206} and the characterization of resonances as the zeros of the functions $f_{V,m}$ defined in \eqref{eq:202} to see that $n_{m, \ell}^2(r) = n_{-m, \ell}^1(r)$, and hence applying the above reasoning with $m$ replaced by $-m$, we are able to prove that 
\begin{equation*}
    \sum_{\ell = 0}^\infty M(\ell)n_{m, \ell}^2(r) = \frac{c_d}{2}r^d + O(r^{d - \frac{3}{4} + \varepsilon}) \quad \text{as $r \to \infty$, for any } \varepsilon > 0.
\end{equation*}
Thus, 
\begin{equation*}
    n_{V, m}(r)  = \sum_{\ell = 0}^\infty M(\ell)n_{m, \ell}(r) = c_dr^d + O(r^{d - \frac{3}{4} + \varepsilon}) \quad \text{as $r \to \infty$, for any } \varepsilon > 0,
\end{equation*}
concluding the proof of Theorem \ref{thm:1.4}. 

The remainder of this section gives the proof of Proposition \ref{prop:3.2}. Since we need to distinguish the regions of validity of the Airy function expansions, we begin by fixing a small $\varepsilon > 0$, and set 
\begin{align*}
    U_1 :=&\; U_\nu^\delta \cap \set{z : \arg \zeta(z) \in (-\pi, -\pi + 2\varepsilon)},\\
    U_2 := &\; U_\nu^\delta \cap \set{z \colon \arg \zeta(z) \in (-\pi + \varepsilon, -\frac{\pi}{3})}.
\end{align*}
Then $U_1 \cup U_2 = U_\nu^\delta$ and it suffices to show that \eqref{eq:326}, \eqref{eq:327} hold uniformly in both $U_1$ and $U_2$.  

We begin by showing that \eqref{eq:326} holds in $U_1$. For $\arg \zeta \in (-\pi, -\pi + 2\varepsilon)$, we have $\arg(-\zeta) \in (0, 2\varepsilon)$. Thus we may apply \eqref{eq:308}, \eqref{eq:309}, \eqref{eq:314}, \eqref{eq:315}, and use the bounds of Proposition \ref{prop:3.1} to arrange 
\begin{align*}
    J_\nu(\nu z) =&\; \frac{\phi(\zeta)}{2\pi^{1/2}\nu^{1/2}\zeta^{1/4}}e^{-\nu \rho}(1 + o(1)) \quad  \text{as } \nu \to \infty, z \in U_1, \tag{3.29} \label{eq:329}\\
    J_\nu'(\nu z) =&\; \frac{-\psi(\zeta)\zeta^{1/4}}{2\pi^{1/2}\nu^{1/2}}e^{-\nu \rho}(1 + o(1)) \quad  \text{as } \nu \to \infty, z \in U_1. \tag{3.30} \label{eq:330}\\
\end{align*}
Next, we use that $\arg(e^{2\pi i/3} \zeta)\in (-\frac{\pi}{3}, -\frac{\pi}{3} + 2\varepsilon)$, along with \eqref{eq:310}, \eqref{eq:311}, \eqref{eq:312}, \eqref{eq:313} and the bounds of Proposition \ref{prop:3.1} again to get 
\begin{align*}
    H_{\nu}^{(1)}(\nu z) =&\; \frac{i \phi(\zeta)}{\pi^{1/2}\nu^{1/2}\zeta^{1/4}}e^{\nu \rho}(1 + o(1))\quad \text{as } \nu \to \infty, z\in U_1 \tag{3.31} \label{eq:331}\\
    H_{\nu}^{(1)'}(\nu z) =&\; \frac{-i \psi(\zeta)\zeta^{1/4}}{\pi^{1/2}\nu^{1/2}}e^{\nu \rho}(1 + o(1))\quad \text{as } \nu \to \infty, z\in U_1 \tag{3.32} \label{eq:332}
\end{align*}
Recalling \eqref{eq:303}, we find 
\begin{equation*}
    F_0^{\nu}(\nu z) = -\frac{i}{\pi} \left(\frac{1 - \tilde{z}^2}{1 - z^2} \right)^{\frac{1}{4}}e^{\nu(\rho - \tilde{\rho})}(1 + o(1)) - \frac{i}{\pi} \left(\frac{1 - z^2}{1 - \tilde{z}^2} \right)^{\frac{1}{4}}e^{\nu(\rho - \tilde{\rho})}(1 + o(1))
\end{equation*}
as $\nu \to \infty$, uniformly for $z \in U_1$ (here, to ease notation, we have written $\tilde{\rho} = \rho(\tilde{z})$, and will similarly write $\tilde{\zeta} = \zeta(\tilde{z})$ below). Using \eqref{eq:317}, we have 
\begin{equation*}
    \left(\frac{1-\tilde{z}^2}{1 - z^2} \right)^{\pm \frac{1}{4}} = 1 + o(1) \quad \text{as } \nu \to \infty, z \in U_1,
\end{equation*}
and using \eqref{eq:318}, we have 
\begin{equation*}
    e^{\nu(\rho - \tilde \rho)} = 1+o(1) \quad \text{as } \nu \to \infty, z\in U_1.
\end{equation*}
Thus \eqref{eq:326} holds in $U_1$. 

To prove that \eqref{eq:326} also holds in $U_2$, first note that the proof that \eqref{eq:331}, \eqref{eq:332} hold in $U_2$ follows without change. Using \eqref{eq:308}, \eqref{eq:309}, \eqref{eq:312}, \eqref{eq:313}, it's easy to check that \eqref{eq:329} and \eqref{eq:330} also still hold in $U_2$. The proof that \eqref{eq:326} holds in $U_2$ then follows as above, and we see that \eqref{eq:326} holds in all of $U_\nu^\delta$. 

We now turn to the proof of \eqref{eq:327}, and before distinguishing between $U_1$ and $U_2$, we begin with some preliminary considerations. Using \eqref{eq:308} and \eqref{eq:309} we may write 
\begin{align*}
    J_\nu(\nu z) =&\; \frac{\phi(\zeta)}{\nu^{1/2}\zeta^{1/4}}U(\zeta, \nu),\\
    J_\nu'(\nu z) =&\; -\frac{\psi(\zeta)\zeta^{1/4}}{\nu^{1/2}}W(\zeta, \nu) \tag{3.33} \label{eq:333}
\end{align*}
for functions $U, W$ satisfying 
\begin{equation*}\tag{3.34} \label{eq:334}
    \begin{aligned}
        U(\zeta, \nu) \sim &\; \zeta^{1/4}\nu^{1/6}Ai(\nu^{2/3}\zeta) \sum_{j=0}^\infty \frac{A_j(\zeta)}{\nu^{2j}} + \frac{\zeta^{1/4} Ai'(\nu^{2/3} \zeta)}{\nu^{7/6}} \sum_{j=0}^\infty \frac{B_j(\zeta)}{\nu^{2j}},\\
    W(\zeta, \nu) \sim &\; \frac{\zeta^{-1/4}Ai(\nu^{2/3}\zeta)}{\nu^{5/6}} \sum_{j=0}^\infty \frac{C_j(\zeta)}{\nu^{2j}} + \frac{\zeta^{-1/4} Ai'(\nu^{2/3} \zeta)}{\nu^{1/6}} \sum_{j=0}^\infty \frac{D_j(\zeta)}{\nu^{2j}}. 
    \end{aligned}
\end{equation*}

Thus from \eqref{eq:304} and \eqref{eq:333}, 
\begin{equation*}
    G_0^{(\nu)}(\nu z) = -2\left(\frac{1 - z^2}{1 - \tilde z^2} \right)^{1/4}U(\zeta, \nu)W(\tilde \zeta, \nu) + 2\left(\frac{1 - \tilde z^2}{1-z^2} \right)^{1/4}U(\tilde \zeta, \nu)W(\zeta, \nu) \tag{3.35} \label{eq:335}
\end{equation*}
Moreover, using that the leading behavior in \eqref{eq:312} and \eqref{eq:314} agree, as does that in \eqref{eq:313} and \eqref{eq:315}, we obtain from \eqref{eq:334} that 
\begin{equation*}\tag{3.36}\label{eq:336}
    \begin{aligned}
        U(\zeta, \nu)W(\tilde \zeta, \nu) =&\; - \frac{1}{4\pi} e^{-2\nu \rho}(1 + o(1)), \\ 
    U(\tilde \zeta, \nu)W(\zeta, \nu) =&\; - \frac{1}{4\pi} e^{-2\nu \rho}(1 + o(1)),
    \end{aligned}
\end{equation*}
as $\nu \to \infty$, uniformly for $z \in U_\nu^\delta$. From \eqref{eq:335}, \eqref{eq:336} and \eqref{eq:317}, we have 
\begin{equation*}
    G_0^{(\nu)} (\nu z) = -2\left[U(\zeta, \nu)W(\tilde \zeta, \nu) - U(\tilde \zeta, \nu)W(\zeta, \nu)\right] - \frac{V_0}{4\pi\nu^2(1 - z^2)}e^{-2\nu \rho}(1 + o(1)), \tag{3.37} \label{eq:337}
\end{equation*}
as $\nu \to \infty$, uniformly for $z \in U_\nu^\delta$. Thus, we just need to prove that 
\begin{equation*}
    U(\zeta, \nu)W(\tilde \zeta, \nu) - U(\tilde \zeta, \nu)W(\zeta, \nu) = O\left( \frac{1}{\nu} \right) + \frac{1}{\nu^2(1 - z^2)}e^{-2 \nu \rho}o(1), \tag{3.38} \label{eq:338}
\end{equation*}
as $\nu \to \infty$, uniformly for $z \in U_\nu^\delta$. 

To prove \eqref{eq:338}, we first use \eqref{eq:334} to see that 
\begin{align*}
    U(\zeta, \nu) =&\; \zeta^{1/4}\nu^{1/6}Ai(\nu^{2/3}\zeta)\left(1 + \frac{A_1(\zeta)}{\nu^2}  \right) + \frac{\zeta^{1/4}Ai'(\nu^{2/3}\zeta)}{\nu^{7/6}}B_0(\zeta) + \frac{1}{\nu^2(1-z^2)}e^{-\nu \rho}o(1),\\
    W(\zeta, \nu) =&\; \frac{\zeta^{-1/4}Ai(\nu^{2/3}\zeta)}{\nu^{5/6}}C_0(\zeta) + \frac{\zeta^{-1/4}Ai'(\nu^{2/3}\zeta)}{\nu^{1/6}}\left(1 + \frac{D_1(\zeta)}{\nu^2} \right) + \frac{1}{\nu^2(1 - z^2)}e^{-\nu \rho}o(1),
\end{align*}
where the remainder bounds are obtained using Proposition \ref{prop:3.1}. Using once again the agreement of the leading behavior of \eqref{eq:312} and \eqref{eq:314}, and of \eqref{eq:313} and \eqref{eq:315}, it then follows that 
\begin{equation*}
    U(\zeta, \nu)W(\tilde \zeta, \nu) - U(\tilde \zeta, \nu)W(\zeta, \nu) = \hat{U}(\zeta, \nu)\hat{W}(\tilde \zeta, \nu) - \hat{U}(\tilde \zeta, \nu)\hat{W}(\zeta, \nu) + \frac{1}{\nu^2(1 - z^2)}e^{-2\nu \rho}o(1)
\end{equation*}
as $\nu \to \infty$, uniformly in $z \in U_\nu^\delta$, where
\begin{align*}
    \hat{U}(\zeta, \nu) =&\; \zeta^{1/4}\nu^{1/6}Ai(\nu^{2/3} \zeta) + \zeta^{1/4}\nu^{1/6}Ai(\nu^{2/3}\zeta) \frac{A_1(\zeta)}{\nu^2} + \frac{\zeta^{1/4}Ai'(\nu^{2/3}\zeta)}{\nu^{7/6}}B_0(\zeta)\\
    \hat{W}(\zeta, \nu) =&\; \frac{\zeta^{-1/4}Ai(\nu^{2/3}\zeta)}{\nu^{5/6}}C_0(\zeta) + \frac{\zeta^{-1/4}Ai'(\nu^{2/3}\zeta)}{\nu^{1/6}} + \frac{\zeta^{-1/4}Ai'(\nu^{2/3}\zeta)}{\nu^{1/6}}\cdot \frac{D_1(\zeta)}{\nu^2}.
\end{align*}

Now, applying \eqref{eq:319} and \eqref{eq:320} and noting that $\nu^{-2}(1-z^2)^{-1} = O(\nu^{-4/3})$ in $U_\nu^\delta$, it is straightforward to arrange 
\begin{align*}
    \tilde \zeta^{1/4}\nu^{1/6}Ai(\nu^{2/3}\tilde \zeta)=&\; \zeta^{1/4}\nu^{1/6}Ai(\nu^{2/3} \zeta) - \frac{V_0}{8\nu^{11/6}z} \cdot \frac{\zeta'}{\zeta^{3/4}}Ai(\nu^{2/3}\zeta) \\
    -&\frac{V_0}{2\nu^{7/6}z}\cdot \zeta' \zeta^{1/4}Ai'(\nu^{2/3} \zeta) + \frac{1}{\nu^2(1-z^2)}e^{-\nu\rho}o(1). \tag{3.39} \label{eq:339}
\end{align*}
Whence, using also \eqref{eq:322}
\begin{equation*}
    \tilde\zeta^{1/4} \nu^{1/6}Ai(\nu^{2/3}\tilde \zeta) \frac{A_1(\tilde \zeta)}{\nu^2} = \zeta^{1/4}\nu^{1/6} Ai(\nu^{2/3} \zeta) \frac{A_1(\zeta)}{\nu^2} + \frac{1}{\nu^2(1-z^2)}e^{- \nu \rho}o(1) \tag{3.40} \label{eq:340}
\end{equation*}
By \eqref{eq:319}, \eqref{eq:321}, and \eqref{eq:323}, we have 
\begin{equation*}
    \frac{\tilde \zeta^{1/4} Ai'(\nu^{2/3}\tilde \zeta)}{\nu^{7/6}}B_0(\tilde \zeta) = \frac{\zeta^{1/4}Ai'(\nu^{2/3}\zeta)}{\nu^{7/6}}B_0(\zeta) + \frac{1}{\nu^2(1-z^2)}e^{-\nu \rho}o(1) \tag{3.41} \label{eq:341}
\end{equation*}
Thus, \eqref{eq:339}, \eqref{eq:340}, \eqref{eq:341} show that 
\begin{equation*}
    \hat{U}(\tilde \zeta, \nu) = \hat{U}(\zeta, \nu) - \frac{V_0}{8\nu^{11/6}z}\cdot \frac{\zeta'}{\zeta^{3/4}}Ai(\nu^{2/3} \zeta) - \frac{V_0}{2\nu^{7/6}z}\cdot \zeta' \zeta^{1/4}Ai'(\nu^{2/3} \zeta) + \frac{1}{\nu^2(1-z^2)}e^{-\nu\rho}o(1) \tag{3.42}\label{eq:342}
\end{equation*}
as $\nu \to \infty$, uniformly in $z \in U_\nu^\delta$, In analogous fashion, we also obtain 
\begin{equation*}
    \hat{W}(\tilde \zeta, \nu) = \hat{W}(\zeta, \nu) + \frac{V_0}{8\nu^{13/6}z} \cdot \frac{\zeta'}{\zeta^{5/4}}Ai'(\nu^{2/3}\zeta) - \frac{V_0}{2\nu^{5/6}z}\cdot \zeta' \zeta^{3/4} Ai(\nu^{2/3}\zeta) + \frac{1}{\nu^2(1 - z^2)}e^{-\nu \rho}o(1) \tag{3.43} \label{eq:343}
\end{equation*}
as $\nu \to \infty$, uniformly in $z \in U_\nu^\delta$. 

Using that 
\begin{align*}
    \hat{U}(\zeta, \nu) =&\; \frac{1}{2\pi^{1/2}}e^{-\nu \rho}(1 + o(1))\\
    \hat{W}(\zeta, \nu) =&\; - \frac{1}{2\pi^{1/2}}e^{-\nu \rho}(1 + o(1))
\end{align*}
as $\nu \to \infty$, uniformly in $U_\nu^\delta$, along with \eqref{eq:342}, \eqref{eq:343}, and making some bounds, we see that to prove \eqref{eq:338} it remains to prove that 
\begin{equation*}
    H(\zeta, \nu) = O\left( \frac{1}{\nu} \right) + \frac{1}{\nu^2(1 - z^2)}e^{-2\nu\rho}o(1) \quad \text{as $\nu \to \infty$, uniformly in } z \in U_\nu^\delta \tag{3.44}\label{eq:344}
\end{equation*}
where 
\begin{equation*}
    H(\zeta, \nu) := \frac{V_0}{2\nu^{4/3}z}\zeta' Ai'(\nu^{2/3}\zeta)^2 - \frac{V_0}{2\nu^{2/3} z}\zeta' \zeta Ai(\nu^{2/3}\zeta)^2 + \frac{V_0}{4\nu^2z} \frac{\zeta'}{\zeta}Ai(\nu^{2/3}\zeta)Ai'(\nu^{2/3}\zeta). \tag{3.45} \label{eq:345}
\end{equation*}

We shall show that \eqref{eq:344} holds in $U_1$ and $U_2$ separately, and we begin with $U_1$. In $U_1$, the expansions \eqref{eq:314}, \eqref{eq:315} are relevant, and from \eqref{eq:345} we obtain 
\begin{align*}
    H(\zeta, \nu) =&\; \frac{iV_0\zeta'\zeta^{1/2}}{2\pi \nu z} \left[\sin^2\left( -i \nu \rho - \frac{\pi}{4} \right) - \frac{2b_0'}{i \nu \rho} \cos \left(-i\nu \rho - \frac{\pi}{4} \right)\sin \left(-i\nu\rho - \frac{\pi}{4} \right) \right]\\
    +&\;\frac{iV_0\zeta'\zeta^{1/2}}{2\pi \nu z} \left[\cos^2\left( -i \nu \rho - \frac{\pi}{4} \right) - \frac{2b_0}{i \nu \rho} \cos \left(-i\nu \rho - \frac{\pi}{4} \right)\sin \left(-i\nu\rho - \frac{\pi}{4} \right) \right]\\
    +&\;\frac{V_0\zeta'}{4\pi \nu^2 z \zeta} \cos\left( -i \nu \rho - \frac{\pi}{4} \right)\sin\left(-i\nu\rho - \frac{\pi}{4} \right) + \frac{1}{\nu^2(1-z^2)}e^{-2\nu\rho}o(1)\\
    =&\; O\left( \frac{1}{\nu}\right) + \frac{1}{\nu^2(1-z^2)}e^{-2\nu\rho}o(1).
\end{align*}
In the last step we used that $b_0 +b_0' = \frac{1}{6}$ (see \eqref{eq:316}) and $\rho = \frac{2}{3}\zeta^{3/2}$. 

The proof that \eqref{eq:344} holds in $U_2$ is similar. By using \eqref{eq:312}, \eqref{eq:313} we have here 
\begin{align*}
    H(\zeta, \nu) =&\; \frac{V_0 \zeta' \zeta^{1/2}}{8\pi \nu z}e^{-2\nu \rho} \left(1 + \frac{2u_1'}{\nu \rho} \right) - \frac{V_0\zeta' \zeta^{1/2}}{8\pi \nu z}e^{-2\nu \rho}\left( 1- \frac{2u_1}{\nu \rho} \right)\\
    -&\; \frac{V_0}{16\nu^2 z}\cdot \frac{\zeta'}{\zeta}e^{-2\nu \rho} + \frac{1}{\nu^2(1-z^2)}e^{-2\nu \rho}o(1) \\
    =&\; \frac{1}{\nu^2 (1 - z^2)}e^{-2\nu\rho}o(1)
\end{align*}
where we used $u_1 + u_1' = \frac{1}{6}$ (see \eqref{eq:316}) and $\rho = \frac{2}{3}\zeta^{2/3}$. 

Thus \eqref{eq:344} holds in all of $U_\nu^\delta$, and putting all of the pieces together we conclude the proof of Proposition \ref{prop:3.2}.

\section{PROOFS OF THEOREMS \ref{thm:1.5}, \ref{thm:1.2}, and \ref{thm:1.1}}\label{sec:4}

\subsection{Proof of Theorem \ref{thm:1.5}}\label{sec:4.1} We recall some definitions from the theory of functions of several complex variables; see \cite{11}, \cite{14} for more information. Let $\Omega \subset \C^p$ be an open connected set. A function $f \colon \Omega \to \R \cup \set{-\infty}$ is called \emph{plurisubharmonic} (or p.s.h.) provided $f$ is upper semi-continuous, $f \not\equiv -\infty$ and for any $z \in \Omega$ and any $\omega, r$ such that $z + u \omega \subset \Omega$ for $u \in \C, \abs{u} \leq r$, we have 
\begin{equation*}
    f(z) \leq \frac{1}{2\pi} \int_0^{2\pi} f(z + \omega re^{i \theta}) d\theta.
\end{equation*}
We call a set $E \subset \Omega$ \emph{pluripolar} if there exists a p.s.h. function $f$ such that $E \subset \set{z : f(z) = -\infty}$. A pluripolar set has Lebesgue measure zero and the countable union of pluripolar sets is pluripolar. 

To prove Theorem \ref{thm:1.5}, which is the even-$d$ analogue of \cite[Theorem 1.2]{7}, we will follow \cite[Section 5]{7}. We recall a couple of important lemmas used there and refer to \cite{7} for the simple proofs. 

\begin{lm}[{\cite[Lemma 5.2]{7}}]\label{lem:4.1} Let $\Phi_n, n = 1, 2, \ldots $ be a sequence of p.s.h. functions on an open connected set $\Omega \subset \C^p$. Assume there are constants $c > 0$ and $\gamma > 1$ such that $\Phi_n \leq cn^{-\gamma}$ on $\Omega$ and $\Phi_n(\theta_0) > -cn^{-\gamma}$ for some point $\theta_0 \in \Omega$. Then for every $\alpha < \gamma - 1$ there exists a pluripolar set $E \subset \Omega$ such that $\Phi_n(\theta) = o(n^{-\alpha})$ for every $\theta \in \Omega \backslash E$. 
\end{lm}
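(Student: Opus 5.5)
The plan is to build a single plurisubharmonic function whose $-\infty$ locus contains every bad point, by summing weighted nonpositive shifts of the $\Phi_n$. Fix $\alpha < \gamma - 1$ and set
\begin{equation*}
    \Psi(\theta) := \sum_{n=1}^\infty n^{\alpha}\left(\Phi_n(\theta) - c n^{-\gamma}\right), \quad \theta \in \Omega .
\end{equation*}
By hypothesis each summand is $\leq 0$ on $\Omega$. It is p.s.h., being a positive multiple of a p.s.h. function plus a constant. Hence the partial sums $\Psi_N$ are p.s.h. and form a pointwise nonincreasing sequence, and $\Psi = \lim_N \Psi_N$ is well defined with values in $[-\infty, 0]$.

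Next I will show that $\Psi$ is p.s.h. Upper semicontinuity holds because $\Psi = \inf_N \Psi_N$ is an infimum of upper semicontinuous functions. The sub-mean value inequality passes to the limit by monotone convergence: the $\Psi_N$ decrease and are bounded above by $0$, so their circle averages converge to the circle average of $\Psi$. It remains to check $\Psi \not\equiv -\infty$, and the point $\theta_0$ supplies this. Using $\Phi_n(\theta_0) > -cn^{-\gamma}$,
\begin{equation*}
    \Psi(\theta_0) \geq -2c\sum_{n=1}^\infty n^{\alpha - \gamma} > -\infty,
\end{equation*}
and the series converges precisely because $\alpha - \gamma < -1$. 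This is the only place the condition $\alpha < \gamma - 1$ enters. (The standard fact that a decreasing limit of p.s.h. functions on a connected open set is either p.s.h. or $\equiv -\infty$ could be cited instead; the direct argument above suffices.) Consequently $E := \set{\theta \in \Omega : \Psi(\theta) = -\infty}$ is pluripolar by definition.

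Finally, fix $\theta \in \Omega \backslash E$. Then $\sum_n n^{\alpha}(cn^{-\gamma} - \Phi_n(\theta))$ is a convergent series of nonnegative terms, so its terms tend to zero. Thus $n^\alpha\left(cn^{-\gamma} - \Phi_n(\theta)\right) \to 0$, which gives the lower bound
\begin{equation*}
    \Phi_n(\theta) \geq cn^{-\gamma} - o(n^{-\alpha}).
\end{equation*}
Combining this with the upper bound $\Phi_n(\theta) \leq cn^{-\gamma}$, and noting $n^{-\gamma} = o(n^{-\alpha})$ since $\gamma > \alpha$, we get $\abs{\Phi_n(\theta)} = o(n^{-\alpha})$, as claimed. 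The only step needing any care is the plurisubharmonicity of the infinite sum, namely upper semicontinuity and $\Psi \not\equiv -\infty$, and the summability at $\theta_0$ handles it. Everything else is elementary.
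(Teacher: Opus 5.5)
Your proposal is correct and follows essentially the same route as the Dinh--Vu proof the paper cites (the paper gives no proof of its own): form the nonpositive series $\sum_n n^{\alpha}(\Phi_n - cn^{-\gamma})$, use summability at $\theta_0$ (the only place $\alpha<\gamma-1$ is needed) to see the decreasing limit is p.s.h., and take $E$ to be its $-\infty$ locus. Your explicit checks of upper semicontinuity and of the sub-mean value inequality via monotone convergence fill in details that the cited argument leaves implicit.
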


The next lemma relates the asymptotics of $n_{V, m}$ and $N_{V, m}$. It is an improvement over \cite[Lemma 1]{19}.

\begin{lm}[{\cite[Proposition 5.1]{7}}]\label{lem:4.2}
    Let $V \in L_c^\infty(\R^d; \C)$, fix $m \in \Z \backslash \set{0}$, and let $\delta, A$ be strictly positive constants such that $\delta < d$. Then we have $(a) \implies (b) \implies (c)$ where 
    \begin{enumerate}[(a)]
        \item $n_{V, m}(r) = Ar^d + O(r^{d - \delta})$ as $r \to \infty$,

        \item $N_{V, m}(r) = \frac{A}{d}r^d + O(r^{d - \delta})$ as $r \to \infty$,

        \item $n_{V, m}(r) = Ar^d + O(r^{d - \frac{\delta}{2}})$ as $r \to \infty$.
    \end{enumerate}
\end{lm}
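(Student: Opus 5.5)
The plan is to prove the two implications separately, using only that $n_{V,m}$ is a nondecreasing, integer-valued step function and that $N_{V,m}(r)=\int_0^r t^{-1}n_{V,m}(t)\,dt$ is finite. For the latter I will take as given that $n_{V,m}(t)$ vanishes for small $t>0$ (equivalently, that resonances on $\Lambda_m$ do not accumulate at $0$), as is implicit in the definition of $N_{V,m}$. This is the one input from resonance theory; the rest is elementary. In particular, $\int_0^1 t^{-1}n_{V,m}(t)\,dt$ is a finite constant.

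\emph{(a) $\Rightarrow$ (b).} Split $N_{V,m}(r)=\int_0^1 t^{-1}n_{V,m}(t)\,dt+\int_1^r t^{-1}n_{V,m}(t)\,dt$. Substituting (a) into the second integral gives
\[
\int_1^r \bigl(At^{d-1}+O(t^{d-1-\delta})\bigr)\,dt=\frac{A}{d}r^d+O(r^{d-\delta})+O(1).
\]
Since $0<\delta<d$, we have $r^{d-\delta}\to\infty$. Hence the $O(1)$ terms, including the contribution from $[0,1]$, are absorbed into $O(r^{d-\delta})$, and (b) follows.

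\emph{(b) $\Rightarrow$ (c).} This is a standard Tauberian argument using monotonicity of $n_{V,m}$. Fix $r$ large and a parameter $\eta=\eta(r)\in(0,1/2)$ to be chosen later.

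For the upper bound, monotonicity gives
\[
N_{V,m}(r(1+\eta))-N_{V,m}(r)=\int_r^{r(1+\eta)}\frac{n_{V,m}(t)}{t}\,dt\ \ge\ n_{V,m}(r)\log(1+\eta).
\]
By (b), the left side equals $\frac{A}{d}r^d\bigl((1+\eta)^d-1\bigr)+O(r^{d-\delta})$, with an implied constant uniform in $\eta\le 1/2$. Since
\[
\frac{(1+\eta)^d-1}{d\log(1+\eta)}=1+O(\eta),
\]
we obtain
\[
n_{V,m}(r)\le Ar^d+O(\eta r^d)+O(\eta^{-1}r^{d-\delta}).
\]

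For the lower bound, apply the same argument on $[r(1-\eta),r]$, using $\int_{r(1-\eta)}^r t^{-1}n_{V,m}(t)\,dt\le n_{V,m}(r)\,\bigl|\log(1-\eta)\bigr|$. This gives
\[
n_{V,m}(r)\ge Ar^d-O(\eta r^d)-O(\eta^{-1}r^{d-\delta}).
\]

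Choosing $\eta=r^{-\delta/2}$ balances the two error terms, each becoming $O(r^{d-\delta/2})$, and (c) follows.

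I expect no genuine obstacle. The only points needing care are keeping the $O$-constant from (b) uniform when it is evaluated at both $r$ and $r(1\pm\eta)$, which holds because these are comparable for $\eta\le1/2$, and the finiteness of $N_{V,m}$ near $0$ noted above.
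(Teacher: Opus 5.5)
Your argument is correct and is essentially the standard elementary proof behind \cite[Proposition 5.1]{7}, which the paper cites rather than reproves. For (a)$\Rightarrow$(b) you integrate the asymptotics against $dt/t$, absorbing constants using $\delta<d$; for (b)$\Rightarrow$(c) you compare $N_{V,m}$ at $r$ and $r(1\pm\eta)$ via monotonicity of $n_{V,m}$ with $\eta=r^{-\delta/2}$. Your standing assumption that $\int_0^1 t^{-1}n_{V,m}(t)\,dt<\infty$, i.e.\ no accumulation of resonances on $\Lambda_m$ at $0$, is the same one the paper makes implicitly when it defines $N_{V,m}$.
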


Now let $\set{V_z}_{z \in \Omega} \subset L^\infty(\B_a; \C)$ be a family analytic in $\Omega$. As in Section \ref{sec:2}, we now define 
\begin{equation*}
    f_{V_z, m}(\lambda) := \det(I + mA_{V_z})
\end{equation*}
and set 
\begin{equation*}
    \psi_m(r, z) := \frac{1}{2\pi r^d} \int_0^\pi \log \abs{f_{V_z, m}(re^{i \theta})}d\theta - \frac{c_d a^d}{d}. \tag{4.1} \label{eq:401}
\end{equation*}
By the argument of \cite[proof of Lemma 3.2]{6}, for large $r$, $\psi_m(r, z)$ is p.s.h. on $\Omega$. 

We may now give the proof of Theorem \ref{thm:1.5}:

\begin{proof}[Proof of Theorem \ref{thm:1.5}]
    Let $\Omega \subset \C^p$ be an open connected set, let $\set{V_z}_{z \in \Omega} \subset L^\infty(\B_a; \C)$ be a family analytic in $\Omega$, and assume there is $m \in \Z \backslash \set{0}$, $z_m \in \Omega$ and $0 < \delta_m \leq 1$ such that \eqref{eq:104} holds. Define the sets $\Omega_j, j = 1, 2, 3, \ldots$, to be the connected component of $\Omega \cap B(z_m, j)$ that contains $z_m$ (where $B(z_m, j)$ is the open ball in $\C^p$ with center $z_m$ and radius $j$). 

    By our remarks above, the function $\psi_m(r, z)$ defined in \eqref{eq:401} is p.s.h. in $\Omega$. Now fix some $j \geq 1$. Using Theorem \ref{thm:2.3} with $\Omega' = \Omega_j$, we have 
    \begin{equation*}
        \psi_m(r, z) \lesssim \frac{\log r}{r}, \quad z \in \Omega_j \tag{4.2} \label{eq:402}
    \end{equation*}
    with implied constant depending upon $m, j$. On the other hand, using \eqref{eq:104}, Lemma \ref{lem:4.2}, and Theorem \ref{thm:2.3} at the fixed $z_m$, we arrange for any $\varepsilon > 0$, 
    \begin{equation*}
        \psi_m(r, z_m) \gtrsim r^{-\delta_m + \varepsilon}. \tag{4.3} \label{eq:403}
    \end{equation*}

    Using these bounds, we now repeat part of the proof of \cite[Theorem 5.3]{7}: Define for $k = \frac{2}{\delta_m}$, 
    \begin{equation*}
        \Phi_n(z) := \psi(n^k, z),\quad  z \in \Omega_j.
    \end{equation*}
    By \eqref{eq:402}, \eqref{eq:403}
    \begin{equation*}
        \Phi_n(z) \lesssim (\log n)n^{-k} \quad \text{and} \quad \Phi_n(z_m) \gtrsim n^{-k(\delta_m - \varepsilon)}.
    \end{equation*}
    Applying Lemma \ref{lem:4.1}, we see that there exists a pluripolar set $E_{\varepsilon, j}$ such that 
    \begin{equation*}
        \Phi_n(z) = o\left(n^{-k(\delta_m -\varepsilon) + 1 + \varepsilon}\right) = o\left(n^{-1 + \frac{2\varepsilon}{\delta_m} + \varepsilon} \right), \quad z \in \Omega_j \backslash E_{\varepsilon, j}.
    \end{equation*}
    Hence, by Theorem \ref{thm:2.3}, we have for $r = n^k$, 
    \begin{equation*}
        N_{V_z, m}(r) = \frac{c_d a^d}{d}r^d + O\left(r^{d - \frac{\delta_m}{2} + \varepsilon + \frac{\varepsilon \delta_m}{2}} \right), \quad z \in \Omega_j \backslash E_{\varepsilon, j}. \tag{4.4} \label{eq:404}
    \end{equation*}
    Since $N_{V_z, m}(r)$ is increasing in $r$, we deduce that if $n^k \leq r \leq (n+1)^k$, 
    \begin{align*}
        \abs{N_{V_z, m}(r) - \frac{c_d a^d}{d}r^d} \lesssim&\; (n+1)^{kd} - n^{kd} + O\left(r^{d-\frac{\delta_m}{2} + \varepsilon +\frac{\varepsilon \delta_m}{2}} \right)\\
        \lesssim&\; n^{kd-1} + O\left(r^{d - \frac{\delta_m}{2} + \varepsilon + \frac{\varepsilon \delta_m}{2}} \right)\\
        \lesssim&\; O \left( r^{d - \frac{\delta_m}{2} + \varepsilon + \frac{\varepsilon\delta_m}{2}} \right).
    \end{align*}
Thus, \eqref{eq:404} holds for $r \to \infty$ with $r \in (0, \infty)$. 

Now define $E_j := \bigcup_{n=1}^\infty E_{1/n, j}$. This set is pluripolar, and for any $\varepsilon > 0$, 
\begin{equation*}
    N_{V_z, m}(r) = \frac{c_d a^d}{d}r^d + O \left(r^{d - \frac{\delta_m}{2} + \varepsilon} \right) \quad \text{as } r \to \infty, z \in \Omega_j \backslash E_j.
\end{equation*}
Setting $\widehat{E}_m := \bigcup_{j=1}^\infty E_j$, which is also pluripolar, we see that for any $\varepsilon > 0$, 
\begin{equation*}
    N_{V_z, m}(r) = \frac{c_d a^d}{d}r^d + O\left( r^{d - \frac{\delta_m}{2} + \varepsilon} \right) \quad \text{as } r \to \infty, z \in \Omega \backslash \widehat{E}_m.
\end{equation*}
Applying Lemma \ref{lem:4.2} concludes the first part of the proof of Theorem \ref{thm:1.5}. 

To prove the second statement, we simply repeat the first part of the proof for each $m \in \Z \backslash \set{0}$ to obtain pluripolar sets $\widehat{E}_m$. Setting $E := \bigcup_{m \in \Z \backslash \set{0}} \widehat{E}_m$, which is pluripolar, the claim holds for this $E$. 
\end{proof}

\subsection{Maximal resonance density for generic potentials}\label{sec:4.2} In this subsection we give the proofs of Theorems \ref{thm:1.1} and \ref{thm:1.2}, and we begin with the more difficult Theorem \ref{thm:1.2}. We shall follow the arguments of Christiansen-Hislop in \cite{5} and \cite[Section 4]{6} but with modifications suited to our purposes. Although some of the proofs are similar, we repeat all arguments in order to illustrate the necessary modifications, for clarity and convenience of the reader. 

Recall that for $V \in L^\infty(\B_a, \C), m \in \Z \backslash \set{0}$, the function $f_{V, m}$ defined in \eqref{eq:202} is analytic in a neighborhood of $\set{\lambda \in \Lambda: 0 \leq \arg \lambda \leq \pi, \abs{\lambda} > c_0 (\norm{V}_\infty)}$. Then for $N, M , \varepsilon > 0, j > 2Nc_0, m \in \Z \backslash\set{0}$, and $\F = \R$ or $\C$, define the sets
\begin{align*}
    A_m(N, M, \varepsilon, j) := \bigg \{V \in L^\infty (\B_a; \F): \norm{V}_\infty \leq N, \int_0^\pi \log \abs{f_{V, m}(re^{i \theta})}d\theta \leq \\
    \left( \frac{2\pi c_da^d}{d} - \varepsilon \right) r^d + Mr^{d-1} \text{ for } 2Nc_0 \leq r \leq j\bigg\}.
\end{align*}

\begin{lm}\label{lem:4.3}
    The sets $A_m(N, M, \varepsilon, j) \subset L^\infty(\B_a; \F)$ are closed. 
\end{lm}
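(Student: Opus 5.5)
We take a sequence $V_k \in A_m(N,M,\varepsilon,j)$ with $V_k \to V$ in $L^\infty(\B_a;\F)$ and show that $V \in A_m(N,M,\varepsilon,j)$. The bound $\norm{V}_\infty \le N$ is immediate, since $\norm{V}_\infty = \lim_k \norm{V_k}_\infty \le N$. It remains to pass to the limit in the integral inequality for each fixed $r \in [2Nc_0, j]$. The plan is to show that $f_{V_k,m} \to f_{V,m}$ uniformly on the compact half-circle $\set{re^{i\theta} : \theta \in [0,\pi]}$, and then to use upper semicontinuity of the integral of $\log\abs{\cdot}$.

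\textbf{Step 1 (convergence of determinants).} Fix one cutoff $\chi \in C_c^\infty$ with $\chi = 1$ on $\overline{\B}_a$, and use it as $\chi_V$ for all $V_k$ and $V$. Then $\chi V_k = V_k$, and the operators $V_k R_0(\lambda)\chi$ and $V_k T(\lambda)\chi$ depend linearly on $V_k$. These operators are holomorphic in $\lambda$ on a neighborhood of the closed half-plane sheet away from $0$, since $\chi R_0 \chi$ continues to $\Lambda$. Their norms are controlled by $\norm{V_k}_\infty$ times locally uniform bounds. $T(\lambda)$ has a smooth kernel, so $\chi T(\lambda)\chi$ is trace class with locally uniformly bounded trace norm. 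For $\abs{\lambda} > c_0(N)$ with $0 \le \arg\lambda \le \pi$, the inverse $(I + V R_0(\lambda)\chi)^{-1}$ exists for all $\norm{V}_\infty \le N$; this is exactly how $c_0$ depending only on $\norm{V}_\infty$ arises, presumably via a Neumann series or the same argument as in \cite{6}. The inverse depends continuously on $V$ in operator norm. It follows that $(I+V_kR_0\chi)^{-1}V_kT\chi \to (I+VR_0\chi)^{-1}VT\chi$ in trace norm, uniformly for $\lambda$ on the compact set $\set{re^{i\theta}: 2Nc_0\le r\le j,\ \theta\in[0,\pi]}$. Continuity of the Fredholm determinant in trace norm then gives $f_{V_k,m}\to f_{V,m}$ uniformly there. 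The main obstacle is checking that $r \ge 2Nc_0$ lies safely inside the region where the inverse is uniformly bounded, including the boundary rays $\arg\lambda = 0,\pi$. I would appeal to the analytic continuation statement from Section \ref{sec:2}, with constants uniform in $\norm{V}_\infty \le N$. The factor $2$ in $2Nc_0$ gives margin: if $c_0$ scales like $\norm{V}_\infty$, then $c_0(N)\lesssim Nc_0$.

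\textbf{Step 2 (passing to the limit).} For fixed $r$, the functions $\log\abs{f_{V_k,m}(re^{i\theta})}$ are bounded above uniformly in $k$ and $\theta$, and they converge pointwise to $\log\abs{f_{V,m}(re^{i\theta})}$ wherever $f_{V,m}(re^{i\theta})\neq 0$. At zeros we still have $\limsup_k \log\abs{f_{V_k,m}} \le \log\abs{f_{V,m}} = -\infty$. By Fatou's lemma, applied to the nonnegative functions $C - \log\abs{f_{V_k,m}}$,
\begin{equation*}
\int_0^\pi \log\abs{f_{V,m}(re^{i\theta})}d\theta \ge \limsup_k \int_0^\pi\log\abs{f_{V_k,m}(re^{i\theta})}d\theta
\end{equation*}
fails in the wrong direction, so I would instead use the reverse Fatou inequality:
\begin{equation*}
\limsup_k \int_0^\pi \log\abs{f_{V_k,m}} \le \int_0^\pi \limsup_k \log\abs{f_{V_k,m}} = \int_0^\pi \log\abs{f_{V,m}}.
\end{equation*}
This also goes the wrong way for our purpose. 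What is actually needed is $\int\log\abs{f_{V,m}} \le \liminf_k\int\log\abs{f_{V_k,m}}$. I would obtain it via dominated convergence. The zeros of $f_{V,m}$ on the half-circle are finitely many, and Hurwitz's theorem localizes the zeros of $f_{V_k,m}$ near them. The singularities of $\log\abs{\cdot}$ are then uniformly integrable: near a zero write $f_{V_k,m} = p_k g_k$ with $p_k$ a polynomial of fixed degree whose roots converge and $g_k$ nonvanishing, and note that $\log\abs{\theta-w}$ is uniformly integrable in $w$. This gives actual convergence of the integrals, so the inequality $\le(\frac{2\pi c_da^d}{d}-\varepsilon)r^d+Mr^{d-1}$ passes to the limit for every $r\in[2Nc_0,j]$. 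Hence $V\in A_m(N,M,\varepsilon,j)$, and the set is closed.
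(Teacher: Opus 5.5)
Your proposal is correct and follows the paper's route: you get $\norm{V}_\infty\le N$ in the limit, then uniform convergence $f_{V_k,m}\to f_{V,m}$ on $\set{0\le\arg\lambda\le\pi,\ 2Nc_0\le\abs{\lambda}\le j}$, which the paper simply cites from the proof of \cite[Lemma 4.1]{6} and which you reconstruct via the Neumann series and trace-norm continuity of the determinant, and then you pass to the limit in $\int_0^\pi\log\abs{f}\,d\theta$. Your Hurwitz/uniform-integrability argument in Step 2 supplies the justification the paper compresses into ``it follows that'' (it is genuinely needed when $f_{V,m}$ vanishes on the half-circle), so keep it and delete the two Fatou false starts, which only give the unhelpful inequality $\limsup_k\int\log\abs{f_{V_k,m}}\le\int\log\abs{f_{V,m}}$.
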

\begin{proof}
    Let $V_k \in A_m(N, M, \varepsilon, j)$ with $V_k \to V$ in $L^\infty$ norm. Then clearly $\norm{V}_\infty \leq N$. The proof of \cite[Lemma 4.1]{6} shows that $f_{V_k, m}(\lambda) \to f_{V, m}(\lambda)$ as $k \to \infty$ uniformly in $0 \leq \arg \lambda \leq \pi, 2Nc_0 \leq \abs{\lambda} \leq j$. It follows that 
    \begin{equation*}
        \int_0^\pi \log \abs{f_{V, m}(re^{i \theta})} d\theta = \lim_{k \to \infty} \int_0^\pi \log \abs{f_{V_k, m}(re^{i \theta})} d\theta \leq \left( \frac{2 \pi c_d a^d}{d} - \varepsilon \right)r^d + Mr^{d-1}
    \end{equation*}
    for all $2Nc_0 \leq r \leq j$, and thus $V \in A_m(N, M, \varepsilon, j)$ so the claim follows. 
\end{proof}

For $N, M, \varepsilon > 0, m \in \Z \backslash \set{0}$, we now define 
\begin{equation*}
    B_m(N, M, \varepsilon) := \bigcap_{j \geq2Nc_0}A_m(N, M, \varepsilon, j),
\end{equation*}
which is closed by Lemma \ref{lem:4.3}. Our next lemma gives a characterization of those $V \in L^\infty(\B_a; \F)$ for which the maximal resonance density is not achieved. 

\begin{lm}\label{lem:4.4}
    If $V \in L^\infty(\B_a; \F)$ for $\F = \R$ or $\C$, and 
    \begin{equation*}
        \varlimsup_{r \to \infty} \frac{N_{V, m}(r)}{r^d} < \frac{c_d a^d}{d},
    \end{equation*}
    then there exists $N, M, \ell \in \N$ such that $V \in B_m(N, M, \frac{1}{\ell})$. 
\end{lm}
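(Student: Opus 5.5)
The proof will combine the integrated counting function with the averaged log-modulus of $f_{V,m}$ via \eqref{eq:207}. First choose the parameters. Take $N \in \N$ with $N \geq \norm{V}_\infty$. By hypothesis there is $\eta > 0$ with $N_{V,m}(r) \leq (\frac{c_da^d}{d} - 2\eta) r^d$ for all $r \geq r_0$. Choose $\ell \in \N$ with $\frac{1}{\ell} < 4\pi \eta$, so that $2\pi(\frac{c_da^d}{d} - 2\eta) \leq \frac{2\pi c_d a^d}{d} - \frac{2}{\ell}$, leaving a margin of $r^d/\ell$.

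Next, bound the integral from above for large $r$. Apply Theorem \ref{thm:2.3} to the constant family $V_z \equiv V$ (for instance on $\Omega = \C$ with $\Omega'$ the unit ball). This gives, for $r \geq r_1$,
\begin{equation*}
\int_0^\pi \log\abs{f_{V,m}(re^{i\theta})}\,d\theta \leq 2\pi N_{V,m}(r) + 2\pi A r^{d-1} \leq \left(\frac{2\pi c_da^d}{d} - \frac{2}{\ell}\right) r^d + 2\pi A r^{d-1}.
\end{equation*}
We can simply absorb: for $r \geq r_2$ we have $2\pi A r^{d-1} \leq r^d/\ell$, so the inequality in the definition of $A_m(N,M,\frac1\ell,j)$ holds with $M$ arbitrary (say $M \geq 0$) for all $r \geq r_2 := \max(r_0,r_1, 2\pi A\ell)$.

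It remains to handle the compact range $2Nc_0 \leq r \leq r_2$. There $f_{V,m}$ is analytic on a neighborhood of the closed half-annulus $\set{0\le\arg\lambda\le\pi,\ 2Nc_0\le\abs\lambda\le r_2}$, since $2Nc_0 > c_0(\norm{V}_\infty)$. This uses $N\ge1$ and that $c_0$ is nondecreasing in the norm, which one may assume after replacing $c_0$ by a monotone majorant. Hence $\log\abs{f_{V,m}} \leq K$ there for some constant $K$, and the integral is at most $\pi K$. Choose $M \in \N$ so large that $\pi K \leq (\frac{2\pi c_da^d}{d}-\frac1\ell) r^d + M r^{d-1}$ on this bounded range of $r$ (with $r^{d-1}\ge (2Nc_0)^{d-1}>0$). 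This holds for all $j$, so $V \in B_m(N,M,\frac1\ell)$.

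The main obstacle is bookkeeping rather than analysis. One point is the requirement that $r$ avoid the region where $f_{V,m}$ may fail to be analytic, which the choice $2Nc_0$ handles. The other is making sure the constant in \eqref{eq:207} is legitimately available for a single fixed $V$ via the constant family; both points look routine.
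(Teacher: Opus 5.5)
Your argument is correct and is essentially the paper's. Both use \eqref{eq:207}, i.e.\ Theorem~\ref{thm:2.3} for the constant family, to turn the margin in $N_{V,m}$ into an upper bound on $\int_0^\pi \log\abs{f_{V,m}(re^{i\theta})}\,d\theta$ for large $r$, and then enlarge $M$ to cover the remaining bounded range $r \geq 2Nc_0$. You justify that last step (boundedness of $\log\abs{f_{V,m}}$ on the compact half-annulus inside the analyticity region) more explicitly than the paper, and you absorb $2\pi A r^{d-1}$ into the $r^d$ margin rather than into $M$. There is one arithmetic slip: $1/\ell < 4\pi\eta$ only gives $2\pi(\frac{c_da^d}{d}-2\eta) < \frac{2\pi c_da^d}{d}-\frac{1}{\ell}$, so to get the claimed $-\frac{2}{\ell}$ you need $1/\ell \leq 2\pi\eta$ instead.
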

\begin{proof}
    Under the hypothesis of the lemma, there exists $\varepsilon > 0$ such that 
    \begin{equation*}
        N_{V, m}(r) \leq \left( \frac{c_d a^d}{d} - \varepsilon\right) r^d
    \end{equation*}
    for large enough $r$. By Theorem \ref{thm:2.3}, there exists a constant $A_m > 0$ with 
    \begin{equation*}
        \int_0^\pi \log \abs{f_{V, m}(re^{i \theta})} d\theta \leq \left( \frac{2\pi c_d a^d}{d} - 2\pi \varepsilon \right)r^d + 2\pi A_mr^{d-1}
    \end{equation*}
    for large $r$. Thus, by taking a large enough integer $M > 0$, we have 
    \begin{equation*}
        \int_0^\pi \log \abs{f_{V, m}(re^{i \theta})}d \theta \leq \left( \frac{2\pi c_da^d}{d} - 2\pi \varepsilon \right)r^d + Mr^{d-1}
    \end{equation*}
    for all $r \geq 2Nc_0$, where $N$ is any integer with $\norm{V}_\infty \leq N$. It follows that for integer $\ell > \frac{1}{2\pi \varepsilon}, V \in B_m(N, M, 1/\ell)$ and the lemma follows. 
\end{proof}

Recall the definition of the set $\mathfrak M_{\F, a}$ from Theorem \ref{thm:1.2} in the introduction. The following gives one half of the proof of that theorem. 

\begin{lm}\label{lem:4.5}
For any $a > 0, m \in \Z \backslash \set{0}$, and $\F = \R$ or $\C$, the set 
\begin{equation*}
    \mathfrak M_{\F, a, m} := \set{V \in L^\infty(\B_a; \F) : \varlimsup_{r \to \infty} \frac{N_{V, m}(r)}{r^d} = \frac{c_d a^d}{d}}
\end{equation*}
is a $G_\delta$ set, and thus 
\begin{equation*}
    \mathfrak M_{\F, a} = \bigcap_{m \in \Z \backslash \set{0}} \mathfrak M_{\F, a, m}
\end{equation*}
is as well. 
\end{lm}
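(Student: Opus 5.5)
The plan is to show that the complement of $\mathfrak M_{\F, a, m}$ is a countable union of the closed sets $B_m(N, M, 1/\ell)$. Then $\mathfrak M_{\F, a, m}$ is a countable intersection of open sets. The statement for $\mathfrak M_{\F, a}$ follows at once, since a countable intersection over $m \in \Z \backslash \set{0}$ of $G_\delta$ sets is $G_\delta$.

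First, Theorem \ref{thm:1.3} gives $N_{V, m}(r) \leq \frac{c_da^d}{d}r^d + Ar^{d-1}\log r$ for every $V \in L^\infty(\B_a; \F)$. Hence $\varlimsup_{r\to\infty} N_{V,m}(r)/r^d \leq c_da^d/d$ always holds, and $V \notin \mathfrak M_{\F, a, m}$ exactly when the $\varlimsup$ is strictly less than $c_da^d/d$. By Lemma \ref{lem:4.4}, every such $V$ lies in some $B_m(N, M, 1/\ell)$ with $N, M, \ell \in \N$. This gives the inclusion
\[
L^\infty(\B_a; \F) \backslash \mathfrak M_{\F, a, m} \subset \bigcup_{N, M, \ell \in \N} B_m(N, M, 1/\ell).
\]

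For the reverse inclusion, take $V \in B_m(N, M, 1/\ell)$. Then for all $r \geq 2Nc_0$,
\[
\int_0^\pi \log\abs{f_{V,m}(re^{i\theta})}\,d\theta \leq \left(\frac{2\pi c_da^d}{d} - \frac1\ell\right)r^d + Mr^{d-1}.
\]
We apply Theorem \ref{thm:2.3} with the constant family $V_z \equiv V$, which gives \eqref{eq:207}:
\[
N_{V,m}(r) \leq \frac{1}{2\pi}\int_0^\pi \log\abs{f_{V,m}(re^{i\theta})}\,d\theta + A r^{d-1}.
\]
Combining the two bounds yields
\[
N_{V,m}(r) \leq \left(\frac{c_da^d}{d} - \frac{1}{2\pi\ell}\right)r^d + O(r^{d-1}).
\]
So the $\varlimsup$ is at most $c_da^d/d - 1/(2\pi\ell)$, and $V \notin \mathfrak M_{\F, a, m}$. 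This shows the complement equals the countable union exactly.

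Each $B_m(N, M, 1/\ell)$ is an intersection of the sets $A_m$, which are closed by Lemma \ref{lem:4.3}, so each $B_m$ is closed. The complement of $\mathfrak M_{\F, a, m}$ is therefore $F_\sigma$, and $\mathfrak M_{\F, a, m}$ is $G_\delta$.

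The only delicate point is the reverse inclusion. It needs the lower-bound direction of \eqref{eq:207} in Theorem \ref{thm:2.3}, not just the upper bound \eqref{eq:208}, and the constant there may depend on $V$. That dependence is harmless here because the conclusion concerns a single fixed $V$. One small check remains: the range of $j$ in the definition of $B_m$ must be understood so that the inequality holds for all $r \geq 2Nc_0$, which is what the definition gives.
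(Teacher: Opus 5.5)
Your proposal is correct and follows the paper's proof. Lemma \ref{lem:4.4} places the complement of $\mathfrak M_{\F, a, m}$ inside $\bigcup_{N,M,\ell} B_m(N, M, 1/\ell)$, and \eqref{eq:207} of Theorem \ref{thm:2.3}, applied to the constant family, shows that each $B_m(N,M,1/\ell)$ misses $\mathfrak M_{\F, a, m}$. Your explicit use of Theorem \ref{thm:1.3} to rule out $\varlimsup_{r\to\infty} N_{V,m}(r)/r^d > c_da^d/d$ is a step the paper leaves implicit when it applies Lemma \ref{lem:4.4} to the whole complement.
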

\begin{proof}
    By Lemma \ref{lem:4.4}, the complement of $\mathfrak M_{\F, a, m}$ is contained in 
    \begin{equation*}
        \bigcup_{(N, M, \ell) \in \N^3} B_m\left(N, M, \frac{1}{\ell} \right)
    \end{equation*}
    which is an $F_\sigma$ set, i.e., a countable union of closed sets. On the other hand, if $V \in B_m(N, M, 1/\ell)$ for some $N, M, \ell \in \N$, then by Theorem \ref{thm:2.3}, there is $A_m > 0$ so that for large enough $r$, 
    \begin{align*}
        N_{V, m}(r) \leq &\; \frac{1}{2\pi} \int_0^\pi \log \abs{f_{V, m}(re^{i \theta})} d\theta + A_mr^{d-1}\\
        \leq &\; \left( \frac{c_da^d}{d} - \frac{1}{2\pi \ell} \right)r^d + (M + A_m)r^{d-1}
    \end{align*}
    and thus $V \notin \mathfrak M_{\F, a, m}$. Thus $\mathfrak M_{\F, a, m}$ is the complement of an $F_\sigma$ set. 
\end{proof}

We may now give the proof of Theorem \ref{thm:1.2}:

\begin{proof}[Proof of Theorem \ref{thm:1.2}]
    By Lemma \ref{lem:4.5} it remains to show that $\mathfrak M_{\F, a}$ is dense in $L^\infty(\B_a; \F)$. We follow the ideas of \cite[Corollary 1.3]{1}, as in \cite[proof of Theorem 1.1]{6}, but with modifications suitable to our purposes. 

    Let $V \in L^\infty(\B_a; \F)$ be given. Let $V_1(x) := V_0 \chi_{[0, a]}(\abs{x}), V_0 > 0$, be as in Theorem \ref{thm:1.4}, and define the family of potentials 
    \begin{equation*}
        V_z(x) := zV_1(x) + (1-z)V(x), \quad z \in \C.
    \end{equation*}
    Applying Theorem \ref{thm:1.5} to $\set{V_z}_{z \in \C}$, we find a pluripolar set $E \subset \C$ such that for every $m \in \Z \backslash \set{0}$ and $z \in \C \backslash E$,
    \begin{equation*}
        n_{V_z, m}(r) = c_da^dr^d + O\left(r^{d - \frac{3}{16} + \varepsilon} \right) \quad \text{as } r \to \infty, \text{ for any } \varepsilon >0.
    \end{equation*}
By Lemma \ref{lem:4.2} this implies that for every $m \in \Z \backslash\set{0}$ and $z \in \C \backslash E$,
\begin{equation*}
    N_{V_z, m}(r) = \frac{c_d a^d}{d}r^d + O\left(r^{d - \frac{3}{16} + \varepsilon}\right) \quad \text{as $r \to \infty$, for any } \varepsilon > 0
\end{equation*}
    
    In particular, $\set{V_z}_{z \in \C \backslash E} \subset \mathfrak M_{\F, a}$. But recalling that $E \cap \R$ has $\R$-Lebesgue measure zero (e.g. \cite[Section 12.2]{14}), given $\delta > 0$ we may find $z_0 \in \R, z_0 \notin E$ with $\abs{z_0} < \delta/(1 + \norm{V}_\infty + \norm{V_1}_\infty)$. Then $V_{z_0} \in \mathfrak M_{\F, a}$ and $\norm{V_{z_0} - V}_\infty < \delta$. If $V$ is real-valued, so if $V_{z_0}$. Thus $\mathfrak M_{\F, a}$ is dense in $L^\infty(\B_a; \F)$ as claimed. 
\end{proof}

The modifications required for the proof of the odd-$d$ version, Theorem \ref{thm:1.1}, are as follows: Let $s_V(\lambda) := \det S_V(\lambda)$ be the determinant of the scattering matrix; when the dimension is odd (as we are now assuming), $s_V(\lambda)$ is meromorphic in $\C$, and with finitely many exceptions $s_V(\lambda_0) = 0$ if and only if $-\lambda_0$ is a resonance. In place of $A_m(N, M, \varepsilon, j)$ we define 
\begin{align*}
    A(N, M, \varepsilon, j) := &\; \bigg\{ V \in L^\infty(\B_a; \F) : \norm{V}_\infty \leq N, \int_0^\pi \log \abs{s_V(re^{i \theta})} d\theta \leq \\
    &\; \left(\frac{2\pi c_d a^d}{d} - \varepsilon \right)r^d + Mr^{d-1} \text{ for } 2N+1 \leq r \leq j \bigg\}.
\end{align*}
The proof of \cite[Lemma 2.1]{5} may be used to see that $V_k \to V$ implies $s_{V_k}(\lambda) \to s_V(\lambda)$ uniformly in $\operatorname{Im} \lambda \geq 0, 2N+1 \leq \abs{\lambda} \leq j$, and following our proof of Lemma \ref{lem:4.3} shows that $A(N, M, \varepsilon, j)$ is closed. In place of $B_m(N, M, \varepsilon)$, we then set 
\begin{equation*}
    B(N, M, \varepsilon) := \bigcap_{j \geq 2N + 1}A(N, M, \varepsilon, j)
\end{equation*}
which is also closed. 

Analogues of Lemmas \ref{lem:4.4} and \ref{lem:4.5} are had with essentially no change to the proofs, using \cite[Proposition 3.2]{7} in place of Theorem \ref{thm:2.3}, Theorem \ref{thm:1.1} follows by straightforward modification of the proof of Theorem \ref{thm:1.2}, using \cite[Theorems 1.1 and 1.2]{7} in place of our Theorems \ref{thm:1.4} and \ref{thm:1.5}.

\bibliographystyle{amsplain}

\bibliography{references}

\noindent\emph{Email address}: travisdcunningham@gmail.com

\end{document}